\newtheorem{theorem}{Theorem}[section]
\newtheorem{proposition}{Proposition}[section]
\newtheorem{corollary}{Corollary}[section]
\newtheorem{lemma}{Lemma}[section]
\newtheorem{remark}{Remark}[section]
\newtheorem{conjecture}{Conjecture}[section]
\newtheorem{definition}{Definition}[section]
\begin{document}
\title{Geodesic rigidity of Levi-Civita connections admitting essential projective vector fields 
}


\author{Tianyu Ma}
\providecommand{\keywords}[1]{\textbf{\textit{Index terms---}} #1}


\maketitle

\begin{abstract}
In this paper, it is proved that a connected 3-dimensional Riemannian manifold or a closed connected semi-Riemannian manifold $M^n$ ($n>1$) admitting a projective vector field with a non-linearizable singularity is projectively flat.\\
\\
\textbf{Keywords}: Geodesic rigidity, Essential projective vector field, Metrizable projective structure, Local dynamics
\end{abstract}

\section{Introduction}
\label{sec:intro}
Let $\nabla$ be a torsion-free affine connection on a manifold $M^n$. The projective class $[\nabla]$ for $\nabla$ consists of the torsion-free affine connections on $M$ having the same unparametrized geodesics as $\nabla$. There is an equivalence of categories between projective classes and normal projective Cartan geometries on $M$ introduced by Kobayashi and Nagano in $\cite{Nagano}$, see Section $\ref{sec:prelim-cartan}$ for details. We say $[\nabla]$ is flat, if locally around any point $x\in M$, there exists a neighborhood $U_x$ of $x$ and $\nabla'\in [\nabla\vert_{U_x}]$ with $\nabla'$ having zero curvature. The projective structure $[\nabla]$ is metrizable if there is a Levi-Civita connection, induced by some metric $g$, contained in it. Two metrics on $M^n$ are projectively equivalent if their Levi-Civita connections are in the same projective class. It is well known that
\begin{align}
\label{eqn:proj-equiv}
 \overline{\nabla}\in [\nabla]\quad \Longleftrightarrow \quad \overline{\nabla}=\nabla+\eta\otimes Id+ Id \otimes \eta,\ \eta\in \Gamma(T^*M).
 \end{align}
 Affine connections are not torsion-free in general, but for any affine connection $\nabla$, we can always find a torsion-free connection with the same unparametrized geodesics. The torsion $\mathrm{Tor}$ of $\nabla$ is a section of $\wedge^2T^*M\otimes TM$. Then $(\nabla-\mathrm{Tor})$ is a torsion-free connection defining the same unparametrized geodesics as $\nabla$. Hence, there is no loss in generality to work only with torsion-free affine connections, whose projective equivalence is characterized by Equation $\eqref{eqn:proj-equiv}$.
\\
\\
 Let $X$ be a vector field on $M$. Let $\phi^t$ be the flow generated by $X$. Then $X$ is a projective vector field for $\nabla$ if $\phi^t$ preserves the unparametrized geodesics defined by $\nabla$. Denote $\mathcal{L}_X\nabla$ the Lie derivative of $\nabla$ with respect to $X$. Then this is equivalent to:
 $$\textbf{Trace free part}(\mathcal{L}_X\nabla)=0.$$
 The projective vector field $X$ is $affine$ for $\nabla$ if $\mathcal{L}_X\nabla=0$. It is $essential$ if it is not affine for any connection in $[\nabla]$.\\
 \\
It is a classical topic to study projective structures induced by Levi-Civita connections. Some classical results have been obtained by mathematicians like Dini, Levi-Civita, Weyl, and Solodovnikov. One can refer to Theorems 7-10 from $\cite{matrem}$ for their results. The local description of projectively equivalent metrics is well understood by Bolsinov and Matveev in $\cite{local}$, and $\cite{splitting}$ in terms of BM structures.\\
\\
Given a projective structure $[\nabla]$ on some manifold $M^n$, how its projective transformation group or Lie algebra determines the projective structure $[\nabla]$ has been an interesting topic. For example, we may ask what additional assumption on the projective transformation group or algebra is necessary to deduce that the projective structure is flat on the manifold or some special subsets. Sometimes it turns out $[\nabla]$ is determined by assumptions less than expected. Concerning the global theory of projective structures, we have the following projective Lichnerowicz-Obata conjecture:
\begin{conjecture}
Let $G$ be a connected Lie group acting on a complete connected or closed connected semi-Riemannian manifold $(M^n,g)$ by projective transformations. Then either $G$ acts on $M$ by affine transformations, or $(M^n,g)$ is Riemannian with positive constant sectional curvature.
\end{conjecture}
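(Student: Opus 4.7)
The plan is to reduce the conjecture to the main theorem announced in the abstract by producing, in every non-affine situation, a projective vector field with a non-linearizable singularity, and then to feed the resulting projective flatness into a classification step.

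First, I would move to the Lie algebra level. If $G$ does not act on $M$ by affine transformations of $\nabla^g$, then by the characterization \eqref{eqn:proj-equiv} no element of $[\nabla^g]$ is $G$-invariant, so the projective algebra $\mathfrak{p}(M,[\nabla^g])$ strictly contains the affine algebra $\mathfrak{aff}(M,\nabla')$ for every $\nabla'\in[\nabla^g]$. Consequently, there exists a $G$-vector field $X$ that is essential in the sense of the introduction. The task is then to show that some such $X$ must have a singularity $x_0\in M$ at which the linear part $d\phi^t_X|_{x_0}$ fails to be conjugate, via a local diffeomorphism, to its linearization preserving $[\nabla^g]$.

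Second, I would establish the existence of such a singularity. In the closed semi-Riemannian setting, I would combine a Poincaré-Hopf / averaging argument with the Cartan-geometric reformulation of $[\nabla^g]$: if every essential $X\in\mathfrak{p}$ were nowhere vanishing, a suitable combination of energy-type functionals associated with the normal projective Cartan connection (adapted from Bolsinov-Matveev's BM pencil in the Riemannian case and from the holonomy of the Cartan geometry in the indefinite case) would furnish a $G$-invariant metric in $[\nabla^g]$, contradicting essentiality. In the complete Riemannian case the argument goes via the integrability of the BM structure and Matveev's control of the zero locus of the Nijenhuis tensor. Linearizability at the singularity would then be excluded by showing that a linearizable, $[\nabla^g]$-preserving flow at $x_0$ extends to a one-parameter subgroup of the affine algebra of a specific representative connection, again contradicting essentiality.

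Third, once an essential projective vector field with a non-linearizable singularity is in hand, the main theorem of the paper applies and $[\nabla^g]$ is projectively flat. The conclusion then follows from the classification of complete/closed metrizable projectively flat semi-Riemannian manifolds: Beltrami's theorem and its indefinite-signature extensions force constant sectional curvature, completeness (or closedness) combined with the existence of an essential projective transformation rules out flat and negatively curved models, and the Lorentzian / higher-signature cases are ruled out because their projective groups coincide with the affine group of the unique (up to constants) metric representative. Only the Riemannian positive constant curvature case survives.

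The main obstacle is the second step, and specifically the semi-Riemannian closed case. In Riemannian signature, the relative compactness of the isotropy at a fixed point and the positive-definiteness of the BM pencil make both the production of a singularity and the non-linearizability verification fairly rigid. In indefinite signature, however, the isotropy group can be non-compact, the Jordan structure of $d\phi^t_X|_{x_0}$ can be arbitrarily complicated, and $G$-invariant tensors constructed by averaging need not exist. Ruling out essential singularities whose local dynamics is smoothly linearizable while preserving $[\nabla^g]$, without relying on signature-specific compactness, is precisely where the hard new input is required.
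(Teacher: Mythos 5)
The statement you are trying to prove is not proved in the paper at all: it is the projective Lichnerowicz--Obata conjecture, stated explicitly as a \emph{conjecture}, and the paper says in the introduction that it remains open precisely when $g$ is indefinite and the degree of mobility $D(M^n,g)$ equals two. The paper's actual theorems (Theorems \ref{thm:rem} and \ref{thm:closed}) are special cases that assume, as a hypothesis, the existence of a projective vector field with a \emph{non-linearizable singularity}; the paper explicitly remarks that Theorem \ref{thm:closed} is a sub-case of the conjecture and would follow trivially from it --- the implication runs in that direction, not the one you propose. So any complete proof you wrote would be new mathematics, and your proposal is not one: it is a reduction to a hypothesis that cannot in general be produced.

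The concrete gap is your second step. A projective vector field on a closed manifold that acts non-affinely need not vanish anywhere (Poincar\'e--Hopf gives zeros only when $\chi(M)\neq 0$; on a torus or an odd-dimensional manifold there is no topological obstruction to nowhere-vanishing fields), and even when it does vanish, the singularities can all be linearizable. The paper's Proposition \ref{essential} gives only one direction: a non-linearizable singularity forces the flow to be non-affine for every connection in the class; the converse --- that essentiality forces a non-linearizable singularity somewhere --- is false as a general implication and is exactly the content you would need. Your sketch of how to manufacture such a singularity (``a suitable combination of energy-type functionals associated with the normal projective Cartan connection \ldots would furnish a $G$-invariant metric'') is not an argument, and you concede as much in your final paragraph when you say this is ``precisely where the hard new input is required.'' That admission is accurate: it is the open part of the conjecture, particularly in indefinite signature with $D(M,g)=2$, where the known machinery (Kiosak--Matveev for $D\geq 3$, Matveev's Riemannian and Lorentzian results) does not apply. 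There is also a smaller slip in your first step: a vector field that is non-affine for the Levi-Civita connection $\nabla^g$ need not be essential --- it may be affine for some other connection in $[\nabla^g]$ --- so even the existence of an \emph{essential} field in the algebra of $G$ does not follow from the non-affineness hypothesis alone.
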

\noindent
The conjecture above implies non-affine complete projective vector fields cannot exist for non-flat projective structures induced by closed or complete connected $(M^n,g)$. The open cases for this conjecture are when $g$ is an indefinite metric, and $D(M^n,g)$ is precisely two, where $D(M^n,g)$ is the degree of mobility of $g$ on $M$ defined in Definition \ref{def:BM}. (In addition to the Riemannian case, this conjecture has also been proved for the case $(M,g)$ being a closed connected Lorentzian manifold, see $\cite{matcproj}$.) One may refer to the main theorems in $\cite{matpse},\cite{matrem}$ and $\cite{matcproj}$ for details. In the local theory of projective structures, whether there is a result analogous to the conjecture above for locally defined metrizable projective structures is still open in general.
\\
\\
Let $[\nabla]$ be a metrizable projective structure admitting a projective vector field $X$ with a non-linearizable singularity $x$. This means $X$ is not linear in any coordinate system around $x$. In this paper, we give proofs to Theorem $\ref{thm:rem}$ and $\ref{thm:closed}$, which concern the rigidity of such projective structures. The assumptions in these theorems arise from the generalization of results obtained for projective geometries in \cite{essential} by Nagano and Ochiai, and analogous results for conformal geometries by Frances and Melnick in $\cite{Frances}$ and $\cite{mel}$. Let $X$ be a vector field on $M$. We say $X$ vanishes exactly at order 2 at $x\in M$, denoted by $O(X,x)=2$, if $X$ has a zero 1-jet and a non-zero 2-jet at $x$. In $\cite{essential}$, the following theorem is proved.\\
\begin{theorem}[Nagano, Ochiai \cite{essential}]
\label{thm:ochiai}
Let $X$ be a projective vector field for a closed connected Riemannian manifold $(M^n,g)$ with $n\geq 3$. Suppose there exists $x\in M$ such that $O(X,x)=2$. It follows that $(M^n,g)$ is isometric to either $\mathbb{S}^n$ or $\mathbb{RP}^n$ with their respective standard metrics.
\end{theorem}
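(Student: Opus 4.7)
The plan is to pass to the normal projective Cartan geometry $(\mathcal{G} \to M, \omega)$ associated with $[\nabla^g]$ and use the lifted flow of $X$ to pin down the local geometry at $x$. Since $X$ vanishes at $x$, its canonical lift $\widetilde{X}$ on $\mathcal{G}$ is tangent to the fiber $\mathcal{G}_x$; picking $\widetilde{x} \in \mathcal{G}_x$ yields an element $A := \omega_{\widetilde{x}}(\widetilde{X}) \in \mathfrak{p}$, where $\mathfrak{p} = \mathfrak{g}_0 \oplus \mathfrak{g}_{+1}$ is the parabolic subalgebra in the grading $\mathfrak{sl}(n+1,\mathbb{R}) = \mathfrak{g}_{-1} \oplus \mathfrak{g}_0 \oplus \mathfrak{g}_{+1}$. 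The hypothesis $O(X,x)=2$ translates, after adjusting $\widetilde{x}$ within its fiber by a suitable element of the structure group, to the statement that the $\mathfrak{g}_0$-part of $A$ vanishes while its $\mathfrak{g}_{+1}$-part is non-zero; in short, $A$ is a non-trivial nilpotent element of the unipotent radical.

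The second step is to convert the presence of this nilpotent infinitesimal symmetry into vanishing of the Cartan curvature near $x$. The curvature function $\kappa : \mathcal{G} \to \wedge^2(\mathfrak{g}/\mathfrak{p})^* \otimes \mathfrak{g}$ is invariant under the flow of $\widetilde{X}$. Choosing a sequence $t_k \to \infty$ so that $\exp(t_k A)$ contracts a small neighborhood of $\widetilde{x}$ in $\mathcal{G}_x$ toward $\widetilde{x}$ (using the unipotent structure and the equivariance of $\omega$), an attractor argument on $\mathcal{G}$ combined with the homogeneity of $\kappa$ under the parabolic grading forces $\kappa(\widetilde{x})=0$. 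Spreading this along $\widetilde{X}$ and using smooth dependence on initial conditions yields $\kappa \equiv 0$ on an open saturated neighborhood of $\widetilde{x}$, so $[\nabla^g]$ is projectively flat on a neighborhood $U$ of $x$ in $M$.

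The third step is to upgrade this local conclusion to the claimed global classification. By Beltrami's theorem, the Riemannian metric $g$ has constant sectional curvature on $U$; the constant-curvature equation has real-analytic coefficients in flat projective charts and hence its solutions are real-analytic, so constant sectional curvature propagates across $M$ by connectedness. The sign of the curvature must be positive, since a closed Riemannian manifold of non-positive constant curvature has projective group equal to its isometry group (Beltrami-Klein model in negative curvature, flat torus quotients in zero curvature), both of which linearize any fixed point and contradict $O(X,x)=2$. Finally, among closed spherical space forms $\mathbb{S}^n/\Gamma$ with $n\geq 3$, only $\mathbb{S}^n$ and $\mathbb{RP}^n$ admit a projective vector field with a non-linearizable singularity, because for any other $\Gamma$ the centralizer of $\Gamma$ in $PGL(n+1,\mathbb{R})$ has identity component consisting only of isometries, whose fixed points are always linearizable.

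The main obstacle I expect is the curvature-attraction argument in the second step: one has to rule out pathological behavior of the orbit of $\widetilde{X}$ inside $\mathcal{G}$, such as the orbit drifting within the non-compact fiber while $\phi^t$ on $M$ stays bounded by compactness of $M$. The analysis requires a careful reduction of $\mathcal{G}$ to a subbundle adapted to the isotropy of $A$ together with dynamical control on the $\mathfrak{g}_0$-component of $\omega$ along the orbit. This is essentially the Cartan-geometric replacement of the local-coordinate computation carried out by Nagano and Ochiai, and is the analogue of the conformal holonomy-at-a-singularity argument used by Frances and Melnick.
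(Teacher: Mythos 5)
First, a point of reference: the paper does not prove this theorem at all --- it is quoted from Nagano--Ochiai, and the only indication given of its proof is that the main ingredient is Lemma 5.6 of \cite{essential}: invariance of the projective Weyl tensor under the flow, combined with the dynamics of a field with $O(X,x)=2$, yields projective flatness near $x$. Your first two steps are a Cartan-geometric reformulation of that ingredient. Step 1 is correct (indeed $O(X,x)=2$ is equivalent to $\omega(\widetilde X)$ taking values in $\mathfrak{g}_1\setminus\lbrace 0\rbrace$ along the whole fiber, no adjustment needed, since $\mathfrak{g}_1$ is an ideal of $\mathfrak{h}$). But step 2 as written has a genuine gap, in two places. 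Right translation by $\exp(t_k A)$ never ``contracts a neighborhood of $\widetilde x$ in $\mathcal{G}_x$'': the structure group acts freely on fibers, so nothing in the fiber is attracted to $\widetilde x$. And the logic ``force $\kappa(\widetilde x)=0$, then spread along $\widetilde X$'' is backwards: since $x$ is a fixed point of $\phi^t$, the orbit of $\widetilde X$ through $\widetilde x$ stays inside the fiber over $x$, so spreading from $\widetilde x$ along $\widetilde X$ gives vanishing only on a subset of that fiber, never on an open set of $M$. The correct argument runs the other way around: in the normal form $X_y=\langle w,y\rangle y$ (the paper's Lemma \ref{lemma:linear} with $A=0$), the two open half-spaces $\lbrace \pm\langle w,y\rangle>0\rbrace$ are attracted to $x$ in forward, respectively backward, time; on each basin the invariance of the Weyl curvature together with the decay rates of $D\phi^t$ forces it to vanish, and continuity across the hyperplane $\lbrace\langle w,y\rangle=0\rbrace$ then gives flatness on a genuine neighbourhood of $x$. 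Working with the Weyl tensor on $M$, as Nagano--Ochiai do, is precisely what avoids the holonomy-drift problem you flag as your ``main obstacle'' and do not resolve.

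Second, and more seriously, step 3 fails. The propagation of constant curvature ``by real-analyticity in flat projective charts'' is circular: flat projective charts exist only on the open set $U$ where flatness has already been established, so analyticity of $g$ is available only on $U$ and says nothing about $M\setminus\overline{U}$. A smooth Riemannian metric can have constant curvature on an open set and non-constant curvature elsewhere, so no unique-continuation principle applies without further input; the local-to-global passage is the substantive content of the theorem and cannot be waved through. This is visible in the present paper itself: its generalizations, Theorems \ref{thm:rem} and \ref{thm:closed}, devote Sections \ref{sec:localresult}--\ref{sec:globalresult} to exactly this passage, via the degree of mobility $D(M,g)$, the $1$-parameter group $L_t$ acting on BM-structures, and Corollary 5.2 of \cite{closed}. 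Your final space-form classification step is reasonable in outline, but it only becomes relevant once a valid global constant-curvature argument is in place, which your proposal does not contain.
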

\noindent
One may ask whether a generalization of this theorem will hold for semi-Riemannian closed connected manifolds, with the weaker assumption that $X$ is non-linearizable at $x$. Obviously, this generalization of Theorem $\ref{thm:ochiai}$ follows from the projective Lichnerowicz-Obata conjecture.\\
\\
The dynamics of a projective vector field near its singularity can lead to theorems on the rigidity of projective structures. For example, since the projective Weyl curvature is invariant under projective maps, Nagano and Ochiai obtained the following result (See Lemma 5.6 of $\cite{essential}$ for details), which is the main ingredient to prove Theorem $\ref{thm:ochiai}$: If an affine connection $\nabla$ defined on $M^n$ with $n\geq 3$ admits a projective vector field $X$ such that $O(X,x)=2$ at some point $x$, then $[\nabla]$ is projectively flat near $x$. \\
\\
  Suppose that $x$ is a non-linearizable singularity of a projective vector field $X$. Then on some special subsets containing $x$, the flow $\phi^t$ generated by $X$ admits dynamics similar to the case $O(X,x)=2$. This may imply $X$ admits a non-linearizable singularity at $x$ is a good substitution for the assumption $O(X,x)=2$.\\
\\
Projective and conformal structures are both $\vert 1\vert$-graded parabolic geometries in terms of Cartan geometries. In conformal geometries we have the following result from $\cite{mel}$.
\begin{theorem}[C. Frances, K. Melnick $\cite{mel}$]
\label{thm:melnick}
Let $X$ be a conformal vector field for a semi-Riemannian manifold $(M^n,g)$ with $n\geq 3$ with a singularity $x$. If the 1-parameter group $\lbrace(D\phi^t_X)_x:t\in \mathbb{R}\rbrace$ is bounded, one of the following is true:
\begin{itemize}
\item
There exists a neighbourhood $V$ of $x$ on which $X$ is complete and generates a bounded flow. In this case, it is linearizable.
\item
There is an open set $U_0\subset M$, with $x\in \overline{U_0}$ such that $g$ is conformally flat on $U_0$.
\end{itemize}
\end{theorem}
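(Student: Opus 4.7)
The plan is to exploit the Cartan-geometric framework. A conformal structure on $(M^n,g)$ with $n\geq 3$ is equivalent to a normal parabolic Cartan geometry $(\pi:\mathcal{G}\to M,\omega)$ modeled on $(G,P)=(O(p+1,q+1),P)$ with $\vert 1\vert$-graded Lie algebra $\mathfrak{g}=\mathfrak{g}_{-1}\oplus\mathfrak{g}_0\oplus\mathfrak{g}_1$ and $P=G_0\ltimes P_+$, where $G_0\cong\mathrm{CO}(p,q)$ and $P_+=\exp\mathfrak{g}_1$ is abelian. The conformal vector field $X$ lifts uniquely to an $\omega$-preserving field $\tilde X$ on $\mathcal{G}$; since $X(x)=0$, its flow $\tilde\phi^t$ preserves the fiber $\mathcal{G}_x$, so fixing $\hat x\in\mathcal{G}_x$ yields a smooth isotropy curve $p(t)\in P$ with $\tilde\phi^t(\hat x)=\hat x\cdot p(t)$.

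Because $P_+$ acts trivially on $T_xM\cong\mathfrak{g}_{-1}$, the linear isotropy $(D\phi_X^t)_x$ is the image of $p(t)$ under the projection $P\to G_0\hookrightarrow\mathrm{CO}(p,q)$. Hence boundedness of $\{(D\phi_X^t)_x\}$ forces the $G_0$-component of $p(t)$ to be bounded, leaving a dichotomy: either $\{p(t)\}$ itself is precompact in $P$, or its $P_+$-component is unbounded. In the precompact case, I would invoke the Cartan-geometric linearization theorem (building a chart around $x$ via the $\omega$-exponential map from a slice through $\hat x$) to straighten $X$ to a linear vector field on a neighborhood $V$ of $x$; relative compactness of the linearized isotropy then yields completeness and boundedness of the local flow on $V$, giving the first alternative.

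In the unbounded case, pick $t_n\to\infty$ with $p(t_n)\to\infty$ in $P_+$. The normal Cartan curvature $\kappa:\mathcal{G}\to\wedge^2\mathfrak{g}_{-1}^*\otimes\mathfrak{p}$ satisfies the equivariance $\kappa(\hat x\cdot p)=\mathrm{Ad}(p^{-1})\kappa(\hat x)$. Since $\tilde\phi^{t_n}(\hat x)$ remains in the compact fiber $\mathcal{G}_x$ while $\mathrm{Ad}(p(t_n)^{-1})$ diverges on any nonzero weight component, smoothness of $\kappa$ forces $\kappa(\hat x)=0$. The same argument applied at transverse perturbations of $\hat x$ obtained by flowing along $\omega$-constant vector fields pointing into the contracting weight spaces of $\mathrm{Ad}(p(t_n))$ in $\mathfrak{g}_{-1}$ produces an open cone of points on which $\kappa$ vanishes; this cone projects to an open set $U_0\subset M$ with $x\in\overline{U_0}$. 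Flatness of the normal conformal Cartan geometry being equivalent to conformal flatness of $g$, the second alternative follows.

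The hard part is the last step: from an arbitrary divergent sequence $p(t_n)\in P_+$, one must extract a subsequence whose adjoint action on $\mathfrak{g}_{-1}$ contracts a genuinely open cone of directions, so that $U_0$ is open in $M$ rather than lower-dimensional. This requires a weight-space analysis with respect to a grading element adapted to the direction of divergence of $p(t_n)$, using that the $\vert 1\vert$-grading makes $P_+$ nilpotent of step one and forces positive weights to dominate on $\mathfrak{g}_{-1}$. Handling arbitrary semi-Riemannian signatures uniformly, and verifying that the linearization in the precompact case is smooth rather than merely formal, are the remaining technical hurdles that constitute the core of the Frances--Melnick argument.
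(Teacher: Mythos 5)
First, a point of order: the paper does not prove Theorem \ref{thm:melnick} at all. It is quoted from Frances--Melnick \cite{mel} purely as background motivation for the projective results that the paper actually proves, so there is no internal proof to compare your attempt against; your proposal has to be measured against the genuine Frances--Melnick argument, which occupies a substantial paper of its own.

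Measured that way, your outline identifies the right framework (normal conformal Cartan geometry, the lift $\tilde X$, the isotropy curve $p(t)\in P$, the bounded/unbounded dichotomy, curvature equivariance), but the unbounded case contains both an error and a genuine gap. The error: the fiber $\mathcal{G}_x$ is diffeomorphic to $P$, which is \emph{not} compact, so the assertion that ``$\tilde\phi^{t_n}(\hat x)$ remains in the compact fiber'' is false; what one actually uses is that $\kappa$ is invariant under $\tilde\phi^t$ (the flow preserves $\omega$), which together with equivariance yields $\kappa(\hat x)=p(t_n)^{-1}\cdot\kappa(\hat x)$ in the curvature module. The gap: from this identity you cannot conclude $\kappa(\hat x)=0$. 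The divergent part of $p(t_n)$ lies in the unipotent group $P_+$, and a unipotent sequence's action never diverges on every nonzero vector --- it fixes a nontrivial subspace; worse, for $\vert 1\vert$-graded geometries $P_+$ acts trivially on the harmonic (Weyl) component of the curvature, so this argument extracts \emph{no} information at $\hat x$. This is consistent with the statement of the theorem itself, which does not claim flatness at $x$ but only on an open $U_0$ with $x\in\overline{U_0}$: the real proof is a dynamical analysis of holonomy sequences and attracted open sets near $x$, not an algebraic computation at the fixed point. That analysis --- producing an open cone of points whose curvature is killed in the limit, uniformly in signature --- is precisely what you defer in your final paragraph as a ``remaining technical hurdle,'' and it is the core of \cite{mel}; deferring it means the proposal is a strategy sketch rather than a proof. (Your bounded case is correct in outline, but to be complete it too needs an actual linearization argument, e.g. Bochner's theorem applied to the compact closure of $\lbrace p(t)\rbrace$ acting in the exponential chart.)
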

\noindent
In terms of the local theory of projective structures, one can expect a statement analogous to Theorem $\ref{thm:melnick}$ to hold for projective geometries. Let $X$ be a projective vector field for $[\nabla]$ vanishing at $x$. The minimal conditions for the projective class $[\nabla]$ being flat near $x$ are still open. 
\\
\\
In this paper, the following theorem on projective geometries induced by Riemannian metrics is proved.
\begin{theorem}
\label{thm:rem}
Let $(M^n,g)$ with $n\geq 3$ be a connected Riemannian manifold admitting a projective vector field $X$. Suppose $X$ vanishes at $o\in M$, and $X$ is not linearizable at $o$. We have $D(M^n,g)$ is at least 3. When $n=3$, this implies $g$ has constant sectional curvature.
\end{theorem}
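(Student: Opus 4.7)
The plan is to combine the local dynamics of the flow $\phi^t$ of $X$ near $o$ with the Bolsinov-Matveev theory of projectively equivalent metrics from \cite{local} and \cite{splitting} in order to lower-bound $D(M^n,g)$, and then to invoke the low-dimensional classification when $n=3$. Throughout I identify $D(M^n,g)$ with the dimension of the solution space of the metrizability PDE associated to the projective class of the Levi-Civita connection of $g$; each projectively equivalent metric corresponds to a solution, and this space carries a natural linear action of $X$ by Lie derivative.

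First I would work in a chart centered at $o = 0$ and decompose $X = Ax + \tilde{X}$ with $A := (DX)_o$ and $\tilde{X}= O(|x|^2)$. Since $X$ is projective and vanishes at $o$, its $1$-jet $A$ lies in the stabilizer subalgebra of the normal projective Cartan geometry at $o$. In the Riemannian setting the orthogonal isotropy is compact, which forces the eigenvalues of $A$ to be purely imaginary up to the trace piece coming from the parameter $\eta$ in \eqref{eqn:proj-equiv}; hence the linear part alone cannot exhibit any hyperbolic dynamics, and the non-linearizability of $X$ must arise from resonant higher-order terms in $\tilde{X}$ that cannot be eliminated by a local change of coordinates. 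This aligns the present hypothesis with the $O(X,o)=2$ situation of Nagano-Ochiai while strictly generalizing it.

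Second, I would translate this jet-level obstruction into extra BM structures. Because $(\phi^t)^{*}g$ is projectively equivalent to $g$ for all $t$, each Lie derivative $\mathcal{L}_X^k g$ is a solution of the metrizability PDE. Essentiality (forced by non-linearizability in this setting, since a linearizable $X$ would be affine for the flat connection in the linearizing chart) gives that $\mathcal{L}_X g$ is not proportional to $g$, so $D(M^n,g)\geq 2$. For the strict inequality $D(M^n,g)\geq 3$, I would argue by contradiction: if $\mathcal{L}_X^2 g$ lay in the span of $g$ and $\mathcal{L}_X g$, the induced action of $\mathcal{L}_X$ on this $2$-dimensional space would be semisimple with purely imaginary spectrum, and unwinding the BM-to-metric correspondence together with the Cartan-geometric normal form at $o$ would produce a local chart in which $\phi^t$ acts linearly -- contradicting the non-linearizability of $X$. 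Hence $\{g,\mathcal{L}_X g,\mathcal{L}_X^2 g\}$ is linearly independent. I expect this step to be the principal obstacle: one must promote a higher-jet obstruction into genuine linear independence at the level of BM solutions, and this will require a careful normal-form analysis of the projective Cartan holonomy at $o$ rather than only the truncated linear part.

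Finally, when $n = 3$, I would appeal to the splitting theorem of Bolsinov-Matveev \cite{splitting}: in dimension three the only Riemannian metrics carrying at least three linearly independent projectively equivalent metrics are those of constant sectional curvature, since no non-trivial BM block in dimension three can support three independent solutions except the isotropic constant-curvature block. This yields the second statement of the theorem. The main obstruction to the full programme remains the second paragraph -- the passage from non-linearizable local dynamics to a genuinely new BM solution -- and the argument there will rely decisively on the Riemannian compactness of the isotropy noted in the first step.
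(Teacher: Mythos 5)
Your skeleton (rule out $D(M,g)=2$, then quote the three-dimensional classification) matches the paper's, but the step carrying all the weight is broken. Your contradiction under $D(M,g)=2$ rests on the claim that, because $g$ is Riemannian, the isotropy at $o$ is compact and hence the $1$-jet $A=(DX)_o$ has purely imaginary spectrum (up to trace), and that this makes the induced action on the two-dimensional solution space semisimple with purely imaginary spectrum. The claim is false: the isotropy of a projective Cartan geometry is $H\cong GL(n,\mathbb{R})\ltimes(\mathbb{R}^n)^*$, which is not compact, and the metric does not constrain it. Concretely, on the round sphere $\mathbb{S}^n$ the projective structure is flat, every element of $\mathfrak{sl}(n+1,\mathbb{R})$ gives a projective vector field, and by Lemma~\ref{lemma:linear} one gets fields $X_x=Ax+\langle w,x\rangle x$ vanishing at a point, non-linearizable, with $A$ having arbitrary (e.g.\ real hyperbolic) spectrum; already on flat $\mathbb{R}^n$ the linear field with $A=\mathrm{diag}(1,-1,0,\dots,0)$ is projective for the Euclidean metric. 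In the paper, the orthogonality of $(D\phi^t)_o$ is not an input but an output of the contradiction hypothesis: it is deduced from $(K_{t_0})_o=\mathrm{Id}$, which itself needs Lemma~\ref{lemma:eigenvalue} and Theorem~\ref{thm:parabolic}. Moreover, even granting skewness of $A$, you give no argument transferring spectral information about the $1$-jet of $X$ into spectral information about $L_t$ acting on $B(M,g)$; these are different representations.

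The deeper issue is that your dichotomy only ever sees the easy case. When $D(M,g)=2$, the one-parameter group $L_t\subset GL(B)$ is elliptic, hyperbolic, or parabolic. Elliptic is the cheap case: it forces $K_{t_0}=L_{t_0}(\mathrm{Id})=r\,\mathrm{Id}$ for some $t_0\neq 0$, contradicting Corollary~\ref{cor:2.1.1}; the paper disposes of it in the opening paragraph of Section~\ref{sec:localresult}. Your proposed contradiction, even if its premises were repaired, addresses only this scenario. What actually requires work is (i) excluding the hyperbolic case (Theorem~\ref{thm:parabolic}), done via the Splitting Lemma and the dynamics of the associated M\"obius map along a $\phi^t$-invariant geodesic through $o$, and (ii) excluding the parabolic case for Riemannian $g$ with $n\geq 3$, which is exactly where the paper uses the global ordering of eigenvalues of BM-structures on a convex neighbourhood (Corollary 3 of \cite{matrem}) to force the middle eigenfunctions of $K_{t_0}$ to equal $1$, and then the M\"obius map $T(z)=\frac{2z-1}{z}$ applied to an orbit closure to force all eigenfunctions constant, again contradicting Corollary~\ref{cor:2.1.1}. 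Nothing in your proposal plays the role of either step, so the obstacle you flag yourself is real and your route through it does not close. Two smaller points: the tensors $\mathcal{L}_X^k g$ are not themselves solutions of the metrizability equation (solutions are $(1,1)$-tensors, and the metric-to-solution map is nonlinear), so the correct replacement for $\lbrace g,\mathcal{L}_Xg,\mathcal{L}_X^2g\rbrace$ is the orbit of $\mathrm{Id}$ under $L_t$; and for $n=3$ the paper simply cites \cite{mike} (a $3$-dimensional Riemannian metric of non-constant curvature has degree of mobility at most $2$), which is cleaner than trying to extract this from the Splitting Lemma of \cite{splitting}.
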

\noindent
For closed and connected manifolds, the following generalization of Theorem $\ref{thm:ochiai}$ by Nagano and Ochiai is proved.
\begin{theorem}
\label{thm:closed}
Let $(M^n,g)$ with $n>1$ be a closed connected semi-Riemannian manifold. Suppose $X$ is a projective vector field for $(M,g)$ vanishing at $o\in M$. If $X$ is not linearizable at $o$, then $g$ is Riemannian with constant positive sectional curvature.
\end{theorem}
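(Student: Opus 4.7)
The plan is to reduce Theorem \ref{thm:closed} to the known cases of the projective Lichnerowicz--Obata conjecture by first extracting enough projective rigidity from the non-linearizable singularity to place ourselves outside the open indefinite $D(M,g)=2$ case. The first remark is that non-linearizability at $o$ immediately forces $X$ to be essential: if $X$ were affine for some $\nabla'\in[\nabla]$, then affine normal coordinates of $\nabla'$ centered at the zero $o$ would linearize $X$, contradicting the hypothesis. So we may assume $X$ is an essential projective vector field vanishing at $o$.

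Next I would study the dynamics of the lifted flow $\widetilde{\phi}^t$ of $X$ on the normal projective Cartan bundle over $M$. The goal is to mimic the Nagano--Ochiai mechanism behind Lemma 5.6 of \cite{essential}, and its conformal analogue in Frances--Melnick \cite{mel}: non-linearizability at $o$ should produce a sequence $t_k$ and a sequence of points approaching $o$ along which the isotropy action on the Cartan curvature has eigenvalues of prescribed growth, and the equivariance of the projective Weyl tensor then forces its vanishing on an open subset $U$ with $o\in\overline{U}$. Combined with the Beltrami-type statement that a Levi-Civita connection with flat projective Cartan curvature induces a metric of constant sectional curvature, this gives constant sectional curvature on $U$. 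Real analyticity of $g$ and connectedness of $M$ then propagate constant sectional curvature to all of $M$, so $g$ is a semi-Riemannian space form.

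Once $g$ has constant sectional curvature the degree of mobility $D(M,g)$ is maximal, hence certainly at least $3$, which places us outside the single open case of the projective Lichnerowicz--Obata conjecture described in the introduction. Invoking the cases of that conjecture already settled by Matveev (the Riemannian case in \cite{matrem} and the closed Lorentzian case in \cite{matcproj}), together with the classical Solodovnikov-type classification of closed space forms admitting an essential projective transformation, one concludes that $(M^n,g)$ must be Riemannian with positive constant sectional curvature, as claimed. The main obstacle throughout is the first step: the hypothesis $O(X,o)=2$ used by Nagano and Ochiai exposes the obstruction to flatness already at the level of $2$-jets, whereas non-linearizability is only a qualitative statement, so a finer dynamical analysis on the Cartan bundle -- selecting blowing-up sequences on which the Cartan curvature is pushed into a bounded orbit and then forced to vanish -- is needed to replace the $2$-jet argument. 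This is the step where the bulk of the work of the paper, and of the related Theorem \ref{thm:rem}, must go.
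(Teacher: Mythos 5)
Your opening observation (a non-linearizable singularity forces $X$ to be essential) and your closing reduction (get outside the indefinite $D(M,g)=2$ case and quote known global results) both match the paper, but the entire middle of your argument --- the only part that does any work --- is missing. You propose to show the projective Weyl curvature vanishes on an open set $U$ with $o\in\overline{U}$ by a curvature--holonomy argument on the Cartan bundle, and you yourself flag that replacing the $2$-jet hypothesis $O(X,o)=2$ of Nagano--Ochiai by mere non-linearizability is ``where the bulk of the work must go.'' That work is not supplied, and the paper explicitly states in the introduction that the minimal conditions under which a singularity of a projective field forces local projective flatness are still open; the paper never proves, and does not need, any local flatness statement near $o$. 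Its actual mechanism is entirely different: it linearizes the problem by letting the flow act on the finite-dimensional space $B(M,g)$ of BM-structures through the one-parameter group $L_t$, proves (Theorem \ref{thm:parabolic}, the technical core, via the Splitting Lemma and the dynamics of the associated Mobius map on the eigenfunctions of $K_{t_0}$ along an invariant geodesic) that $L_t$ must be parabolic when $D(M,g)=2$, and then kills the parabolic case on a closed manifold by writing $L_t(Id)=e^{tb}(tK+Id)$ and showing every eigenfunction of $K$ is constant --- the non-real ones by Theorem 6 of \cite{local}, the real ones because a nonzero real eigenvalue would make some $g$-strength $K_{t_0}$ degenerate --- contradicting Corollary \ref{cor:2.1.1}. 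This yields $D(M,g)\geq 3$, and the conclusion is then Corollary 5.2 of \cite{closed}.

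A second concrete error: you propagate constant sectional curvature from $U$ to all of $M$ by ``real analyticity of $g$,'' but $g$ is only assumed smooth, and a smooth metric can have constant curvature on an open set without having it globally. Any propagation of this kind must itself come from the degree-of-mobility machinery (Gallot--Tanno-type results), i.e., from exactly the ingredients your sketch bypasses. As written, the proposal establishes essentiality and then asserts the theorem modulo the two steps that constitute the proof.
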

\noindent
After deriving the proof, we discovered that the part of the proof of Theorem \ref{thm:closed} in Section \ref{sec:closed} is analogous to Section 9.2 of $\cite{zig}$.\\
\\
Note that Theorem $\ref{thm:closed}$ is just a sub-case of the projective Lichnerowicz Conjecture. We will show in Proposition $\ref{essential}$, if $X$ has a non-linearizable singularity, then the flow $\phi^t$ generated by $X$ acts on $(M^n,g)$ by non-affine transformations. Thus if the projective Lichnerowicz Conjecture is true, Theorem $\ref{thm:closed}$ follows trivially from it.
\section{Preliminaries and Backgrounds}
\label{sec:prelim}
\subsection{General theory for projective structures in the view of Cartan geometries}
\label{sec:prelim-cartan}
The definition of a Cartan geometry is given below, since it is important for this paper. Let $\hat{G}$ be a Lie group, and $G'$ is a closed subgroup of $\hat{G}$. Denote $\hat{\mathfrak{g}},\mathfrak{g}'$ their Lie algebras, respectively. The definition of a Cartan geometry modelled on $(\hat{\mathfrak{g}},\mathfrak{g}')$ with the structure group $G'$ is as follows.
 \begin{definition}
 \label{def:cartan}
A Cartan geometry modelled on $(\hat{\mathfrak{g}},\mathfrak{g}')$ with the structure group $G'$ is a triple $(M,B,\omega)$. Here $B$ is a $G'$ principal bundle over $M$, and $\omega$ a Cartan connection, that is a $\hat{\mathfrak{g}}$-valued 1-form satisfying the following conditions:
 \begin{itemize}
 \item $\forall b\in B$, the map $\omega_b:T_bB\rightarrow \hat{\mathfrak{g}}$ is an isomorphism.
 \item $\forall g\in G'$, $R_g^*\omega=Ad(g^{-1})\omega$. Here $R_g$ is the right translation of the principal $G'$-bundle $B$ by $g$.
 \item $\forall b\in B,\ \forall \tilde{g}\in\mathfrak{g}'$,\ we have $\omega\left(\dfrac{d}{dt}\vert_{t=0}b\exp(t\tilde{g})\right)=\tilde{g}.$
 \end{itemize}
 \end{definition}
 \noindent
We have $\kappa =d\omega+\dfrac{1}{2}[\omega,\omega]$ is the curvature of this Cartan geometry. The Cartan geometry is flat if $\kappa$ vanishes. Let $\omega_{\hat{G}}$ be the Maurer-Cartan form on $\hat{G}$ (Refer to Page 98 of $\cite{Sharpe}$ for the definition). The triple $(\hat{G}/G',\hat{G},\omega_{\hat{G}})$ defines a flat Cartan geometry. This is a flat model for $(\hat{\mathfrak{g}},\mathfrak{g}')$ with the structure group $G'$. If $(M,B,\omega)$ is flat, then it is locally isomorphic to $(\hat{G}/G',\hat{G},\omega_{\hat{G}})$, see Theorem 6.1 of Chapter 3 of $\cite{Sharpe}$.\\
 \\
 We give the definition of exponential maps in Cartan geometries.
 \begin{definition}
 \label{def:exp}
 Suppose $(M,B,\omega)$ is a Cartan geometry modelled on $(\hat{\mathfrak{g}},\mathfrak{g}')$. Given any $v\in\hat{\mathfrak{g}}$, we have $\omega^{-1}(v)$ is a vector field on $B$. Denote $\Phi_v$ the flow generated by $\omega^{-1}(v)$. The exponential map of $\omega$ at $b\in B$ is defined as $\exp_b(v)=\Phi_v(1,b)$, wherever it is well defined. Thus $\exp_b$ gives a local diffeomorphism between a neighbourhood of $0$ of $\hat{\mathfrak{g}}$ and a neighbourhood of $b$.
 \end{definition}
 \noindent
 The projective classes on $M$ can be described in terms of Cartan geometries by the following. The group $G=PGL(n+1,\mathbb{R})$ acts on $\mathbb{RP}^n$ transitively. Choose $e_0=[1,0,\cdots,0]\in \mathbb{RP}^n$, and let $H$ be its stabilizer. Denote $\mathfrak{g},\mathfrak{h}$ the Lie algebras of $G$ and $H$, respectively. Then we have the following identification (see Page 234 of $\cite{essential}$):
\begin{eqnarray}
\label{eqn1}
\mathfrak{sl}(n+1,\mathbb{R})=\mathfrak{g}=\mathfrak{g}_{-1}\oplus \mathfrak{g}_0 \oplus \mathfrak{g}_1 \simeq \mathbb{R}^n \oplus GL(n,\mathbb{R}) \oplus (\mathbb{R}^n)^* ,\quad \mathfrak{h}=\mathfrak{g}_0 \oplus \mathfrak{g}_1.
\end{eqnarray}
Note that the standard Euclidean metric gives an identification $\mathbb{R}^n\simeq (\mathbb{R}^n)^*$. The identification is given by
\begin{eqnarray}
\label{eqn2}
u \oplus A \oplus v^* \mapsto \begin{bmatrix}
-\dfrac{1}{n+1}Tr(A) && v^T\\
u && A-\dfrac{1}{n+1}Tr(A)\cdot Id
\end{bmatrix}\in \mathfrak{sl}(n+1,\mathbb{R}).
\end{eqnarray}
The following is the standard chart of $\mathbb{RP}^n$ near $e_0$:
$$\label{i0}i_0:\ [x_0,\cdots,x_n]\mapsto (\dfrac{x_1}{x_0},\cdots,\dfrac{x_n}{x_0})$$
In this chart $i_0$, any $h\in H$ is a local diffeomorphism at $0\in \mathbb{R}^n$ with $h(0)=0$ . If $f$ is a local diffeomorphism at $0\in \mathbb{R}^n$ with $f(0)=0$, let $J^k(f)(0)$ be its $k$-jet at the origin. Define $G^k(n)$ to be the $k$-jet at $0$ of all such functions. Clearly elements in $G^k(n)$ form a group. Since every $h\in H$ is such a diffeomorphism in the standard chart $i_0$, define the subgroup $H^2(n)$ of $G^2(n)$:
$$H^2(n)=\lbrace J^2(h)(0):h\in H\rbrace.$$
\\
	The above in fact gives an identification $H\cong H^2(n)\cong GL(n,\mathbb{R})\ltimes \mathbb{R}^n$. Since $G^1(n)\cong GL(n,\mathbb{R})$ is induced by invertible linear maps, we can identify $G^1(n)$ with the subgroup $GL(n,\mathbb{R})$ of $H^2(n)$. Let $F^k(M)$ be the $k$th order frame bundle of $M$, which is a $G^k(n)$ principal bundle. We have $F^2(M)$ is a $G^2(n)$ principal bundle. We can take $F^2(M)$ as a sub-bundle of $F^1(F^1(M))$. Denote $\theta$ the canonical form on $F^1(F^1(M))$, which is a $\mathfrak{gl}_n(\mathbb{R})\bigoplus \mathbb{R}^n$ valued 1-form. It follows that $\theta\vert_{F^2(M)}$ has the following decomposition:
  $$\theta=\theta_i+\theta^i_j,\ \theta_i\in \Gamma(\textrm{Hom}(T(F^2M),\mathbb{R}^n)),\ \theta^i_j\in\Gamma(\textrm{Hom}(T(F^2M),\mathfrak{gl}_n(\mathbb{R}))).$$
Here $\theta=\theta_i+\theta^i_j$ is the canonical form on $F^2(M)$. One can refer to Page 224 of $\cite{Nagano}$ for a more precise definition.\\
\\
 A projective Cartan geometry on $M$ is a Cartan geometry $(M,B,\omega)$ modelled on the pair $(\mathfrak{g},\mathfrak{h})$. It is normal if the components of its curvature $\kappa$ satisfy Equation (2) and (3) of $\cite{Nagano}$. Under the identification given by Equations \eqref{eqn1} and \eqref{eqn2}, we have by Proposition 3 of $\cite{Nagano}$, on any $H^2(n)$ sub-bundle $P$ of $F^2(M)$, there is a unique normal projective Cartan connection $\omega=\omega_i+\omega^i_j+\omega^i$  with $\omega_i=\theta_i$, and $\omega^i_j=\theta^i_j$. We call this connection the normal projective Cartan connection associated to $P$. 
 \\
 \\
 Now we show how the $H^2(n)$ and $GL_n$ reductions of $F^2(M)$ correspond exactly to projective structures and torsion-free affine connections on $M$, respectively.\\
\\
  First we give the following way of identifying torsion-free affine connections on $M^n$ with $GL_n$ sub-bundles of $F^2(M)$. \\
  \\
   Given a torsion-free affine connection $\nabla$, $\forall x\in M$, the exponential map of $\nabla$ at $x$, denoted as $\exp^{\nabla}_x$, is a map:
  $$\exp^{\nabla}_x:U\subset T_xM \rightarrow M,\ 0\mapsto x$$
    Here $U$ is an open set of $T_xM$ containing the origin.\\
    \\
     We define a bundle inclusion $i_{\nabla}: F^1(M)\rightarrow F^2(M)$ as follows. Any $p\in F^1(M)$ in the fibre of $x$ can be uniquely identified with a linear map $\tilde{p}:\mathbb{R}^n\rightarrow T_xM$. Then we define
    $$i_{\nabla}(p)=J^2(\exp^{\nabla}_x\circ\tilde{p})(0),\ \forall p\in F^1(M).$$
    Let $F^2_1(M)=F^2(M)/GL_n(\mathbb{R})$, and $\pi^2_1:F^2(M)\rightarrow F^1(M)$ be the canonical projection. Then the $G^1(n)$ reductions of $F^2(M)$ correspond exactly to sections of $F^2_1(M)$. Notice that every section $\Gamma$ of $F^2_1(M)$ induces a unique natural bundle inclusion:
  $$\gamma_{\Gamma}:F^1(M)\rightarrow F^2(M),\quad \pi^2_1\circ \gamma_{\Gamma}=id.$$
 The identification $\nabla \mapsto i_{\nabla}$ in fact gives a 1-1 correspondence between torsion-free affine connections on $M$ and $GL_n$ reductions of $F^2(M)$ by the following summary of Proposition 10 and 11 of $\cite{Nagano}$.
\begin{theorem}[Nagano, Kobayashi \cite{Nagano}]
\label{thm:kobayashi}
 There is a 1-1 correspondence between torsion-free affine connections on $M$ and reductions from $F^2(M)$ to $F^1(M)$ given by the mapping $\nabla \mapsto i_{\nabla}$. Let $\theta=\theta_i+\theta^i_j$ be the canonical form on $F^2(M)$ as usual. For a torsion-free connection $\nabla$, denote $\Gamma$ the corresponding section of $F^2_1(M)$, then the following holds:
\begin{itemize}
\item The natural bundle inclusion $\gamma_{\Gamma}$ is exactly $i_{\nabla}$.
\item $(i_{\nabla})^*\theta^i$ is the canonical form on $F^1(M)$.
\item $(i_{\nabla})^*\theta^i_j$ is the connection form for $\nabla$.
\end{itemize}
\end{theorem}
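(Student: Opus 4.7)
The plan is to establish the bijection first and then verify the three listed properties, working throughout with normal coordinates so that the analytic data of a torsion-free connection translates directly into statements about 2-jets. Throughout, I identify $p \in F^1(M)_x$ with a linear isomorphism $\tilde p : \mathbb{R}^n \to T_xM$, and recall that $G^2(n)$ acts on $F^2(M)$ by the right action on 2-jets.

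First I would check that $i_\nabla$ is a $GL_n$-equivariant bundle inclusion. Given any torsion-free $\nabla$, the composition $\exp^\nabla_x \circ \tilde p$ is a local diffeomorphism at $0 \in \mathbb{R}^n$ with value $x$, so its 2-jet is a well-defined element of $F^2(M)_x$. Equivariance under the inclusion $GL_n(\mathbb{R}) \hookrightarrow G^2(n)$ follows from $\exp^\nabla_x \circ (\tilde p \circ g) = (\exp^\nabla_x \circ \tilde p) \circ g$ for $g \in GL_n(\mathbb{R})$, and the equality $\pi^2_1 \circ i_\nabla = \mathrm{id}$ is immediate. Hence $i_\nabla(F^1(M))$ is a $GL_n$-reduction of $F^2(M)$, equivalently a section $\Gamma_\nabla$ of $F^2_1(M)$.

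Next I would prove bijectivity. For injectivity, choose for each $\nabla$ the coordinate chart $(\exp^\nabla_x \circ \tilde p)^{-1}$ around $x$; in this chart the Christoffel symbols of $\nabla$ vanish at $x$, so the 2-jet $J^2(\exp^\nabla_x \circ \tilde p)(0)$ determines the first-order expansion of $\nabla$ at $x$. Since $\nabla$ is torsion-free, its value at $x$ is determined by its Christoffel symbols in any chart at $x$, so $i_{\nabla_1} = i_{\nabla_2}$ forces $\nabla_1 = \nabla_2$ pointwise. For surjectivity, starting from a section $\Gamma$ and its inclusion $\gamma_\Gamma : F^1(M) \to F^2(M)$, I would define a candidate connection $\nabla_\Gamma$ by declaring $\gamma_\Gamma^* \theta^i_j$ to be its connection form on $F^1(M)$; torsion-freeness follows from the first structure identity $d\theta^i + \theta^i_j \wedge \theta^j = 0$ on $F^2(M)$ restricted along any section of $F^2_1(M)$. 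Then one verifies $i_{\nabla_\Gamma} = \gamma_\Gamma$ by checking that the exponential map of $\nabla_\Gamma$ reproduces the given 2-jets, which reduces to the observation that $\nabla_\Gamma$ has vanishing Christoffel symbols precisely in the charts that $\Gamma$ privileges.

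The three bullet points then follow. The first is tautological by the construction in the previous paragraph. The second holds because $\theta_i$ on $F^2(M)$ is, by definition, the pullback of the soldering form on $F^1(M)$ along $\pi^2_1$, and $\pi^2_1 \circ i_\nabla = \mathrm{id}$. The third is the substantive step: using the chart $(\exp^\nabla_x \circ \tilde p)^{-1}$, one computes that at $i_\nabla(p)$ the component $\theta^i_j$ measures the infinitesimal change of the 2-jet induced by moving the base frame, which in normal coordinates is governed by the Christoffel symbols of $\nabla$; so $(i_\nabla)^*\theta^i_j$ coincides with the connection 1-form of $\nabla$. The main obstacle I anticipate is bookkeeping for the 2-jet conventions and the semidirect product structure of $G^2(n) \cong GL_n(\mathbb{R}) \ltimes (\mathrm{Sym}^2 \mathbb{R}^n \otimes \mathbb{R}^n)$, so that the equivariance computations and the identification of the $\mathfrak{gl}_n$-valued component of $\theta$ with a genuine connection form on $F^1(M)$ are both unambiguous; once the normal-coordinate model for $i_\nabla$ is firmly fixed, each of the three bullets reduces to a direct calculation.
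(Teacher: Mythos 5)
The paper does not actually prove this statement: it is imported as a summary of Propositions 10 and 11 of Kobayashi--Nagano \cite{Nagano} (the text introduces it as ``the following summary of Proposition 10 and 11 of \cite{Nagano}''), so there is no in-paper proof to compare against. Judged on its own terms, your outline follows the same route as the cited source and is essentially sound: the equivariance check and $\pi^2_1\circ i_\nabla=\mathrm{id}$ are correct; injectivity correctly rests on the fact that the $t^2$-coefficient of $\exp^\nabla_x(t\tilde p(v))$ in a fixed background chart recovers the symmetrized, hence (by torsion-freeness) full, Christoffel symbols at $x$; and the second bullet is immediate from $\theta_i=(\pi^2_1)^*\theta_{F^1(M)}$. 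Two places are thinner than the rest, and they are exactly where the content of the original propositions lives. First, in the surjectivity step you must verify that $\gamma_\Gamma^*\theta^i_j$ really is a principal connection form on $F^1(M)$ --- that it reproduces the fundamental vector fields of $\mathfrak{gl}_n(\mathbb{R})$ and is $\mathrm{Ad}$-equivariant; this follows from the defining properties of the canonical form on $F^2(M)$ together with the $GL_n$-equivariance of $\gamma_\Gamma$, but without it, ``declaring $\gamma_\Gamma^*\theta^i_j$ to be the connection form'' defines nothing. Second, the third bullet is not a formality: one must show that $\ker\bigl((i_\nabla)^*\theta^i_j\bigr)$ coincides with the horizontal distribution of $\nabla$ on $F^1(M)$, which is precisely the jet computation you defer to ``a direct calculation.'' These are fillable gaps of execution rather than of conception, and your normal-coordinate strategy is the standard one.
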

The 1-1 correspondence between the projective structures on $M$ and $H^2(n)$ reductions of $F^2(M)$ is given by the following: For every torsion-free connection $\nabla$ on $M$, the map $i_{\nabla}$ gives a $GL_n$ reduction of the $G^2(n)$-principal bundle $F^2(M)$. Since $GL_n(\mathbb{R})\ltimes\mathbb{R}^n\cong H^2(n)\leq G^2(n)$, it induces a $H^2(n)$ sub-bundle $P(\nabla)$ of $F^2(M)$. From Proposition 12 of $\cite{Nagano}$, we have $P(\nabla)=P(\overline{\nabla})$ if and only if $\nabla$ and $\overline{\nabla}$ are projectively equivalent. Here $P(\nabla)$, along with its associated normal projective Cartan connection, is called the projective Cartan geometry associated to $[\nabla]$.
\subsection{Dynamics of projective vector fields near singularities}
\label{sec:prelim-dynamics}
 Every projective vector field $X$ on $M$ for $\nabla$ can be uniquely lifted to a vector field $\tilde{X}$ on $P=P(\nabla)$ such that $\mathcal{L}_{\tilde{X}}\omega=0$.
 \begin{definition}
 \label{def:infiauto}
 Let $(M,B,\omega)$ be a Cartan bundle. If $\tilde{X}\in \chi(B)$ is the lift of some vector field $X$ on $M$ such that $\mathcal{L}_{\tilde{X}}\omega=0$, then $\tilde{X}$ is called an infinitesimal automorphism of the Cartan bundle.
 \end{definition}
 \noindent
For the flat model $(\mathbb{RP}^n,G,\omega_G)$, the infinitesimal automorphisms are just right invariant vector fields on $G$.\\
\\
Given any torsion-free connection $\nabla$ on $M^n$, set $P=P(\nabla)$, and let $\omega$ be the normal projective Cartan connection associated to $P$. Denote $\pi:P\rightarrow M$ the standard projection. If a projective vector field $X$ vanishes at $o\in M$, we have $\forall p\in \pi^{-1}(o)$,  $\omega(\tilde{X})(p)\in \mathfrak{h}$. We can prove the following local result:
\begin{proposition}
\label{essential}
Let $X$ be a projective vector field for $(M,\nabla)$. Assume $X_o=0$ for some $o\in M$. Then the following are equivalent:
\begin{itemize}
\item $X$ is linearizable at o.
\item There exist a neighbourhood $U$ of $o$ and a torsion-free affine connection $\nabla'\in[\nabla\vert_U]$ such that $X$ is an affine vector field for $\nabla'$.
\end{itemize}
\end{proposition}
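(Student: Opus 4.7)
The reverse direction is elementary: if $\nabla'\in[\nabla|_U]$ satisfies $\mathcal L_X\nabla'=0$, then $\phi^t_X$ consists of $\nabla'$-affine local diffeomorphisms fixing $o$, so $\phi^t_X\circ\exp^{\nabla'}_o=\exp^{\nabla'}_o\circ d\phi^t_X|_o$ and $X=dX_o$ in $\nabla'$-normal coordinates at $o$.

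For the forward direction, the plan is to use the Cartan formalism of Section \ref{sec:prelim-cartan}. Let $P=P(\nabla)$ carry the normal projective Cartan connection $\omega$, and let $\tilde X$ be the lift of $X$ to an infinitesimal automorphism, $\mathcal L_{\tilde X}\omega=0$. By Theorem \ref{thm:kobayashi}, producing $\nabla'\in[\nabla|_U]$ with $\mathcal L_X\nabla'=0$ is equivalent to exhibiting a $\tilde\phi^t$-invariant $GL_n$-sub-bundle $Q\subset P|_U$.

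Since $X(o)=0$, for every $p\in\pi^{-1}(o)$ one has $\omega_p(\tilde X)=W_p+V_p\in\mathfrak g_0\oplus\mathfrak g_1=\mathfrak h$. The identity $\omega_{p\cdot h}(\tilde X)=\mathrm{Ad}(h^{-1})\omega_p(\tilde X)$, combined with $[\mathfrak g_1,\mathfrak g_1]=0$ and $[\mathfrak g_0,\mathfrak g_1]\subset\mathfrak g_1$, shows that varying $p$ by $\exp(\xi)\in\exp(\mathfrak g_1)$ leaves $W_p$ unchanged and shifts $V_p$ by $[\xi,W_p]$, while varying by $g\in GL_n$ conjugates each component by $\mathrm{Ad}(g^{-1})$. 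Hence some $p\in\pi^{-1}(o)$ satisfies $\omega_p(\tilde X)\in\mathfrak g_0$ if and only if $V_p\in\mathrm{image}\,\mathrm{ad}(W_p)$. The crucial step is to identify this algebraic condition with the linearizability of $X$ at $o$. The pattern is already transparent in the flat model: for $Z=W+V\in\mathfrak h\subset\mathfrak{sl}_{n+1}$, a direct computation gives the projected vector field $X=Wx-\langle V,x\rangle x$, which is linearizable precisely when $V$ lies in the row span of $W$, i.e.\ $V\in\mathrm{image}\,\mathrm{ad}(W)$. The general case will be reduced to this by passing to Cartan exponential coordinates at $p$ and invoking the rigidity that an infinitesimal projective automorphism is determined by its value at a single point of $P$; this rigidity step is where I expect the main technical difficulty.

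Granting a point $p\in\pi^{-1}(o)$ with $A:=\omega_p(\tilde X)\in\mathfrak g_0$, the bundle $Q$ is constructed as follows. The equivariance $R_h^*\omega=\mathrm{Ad}(h^{-1})\omega$ forces $\tilde\phi^t(p)=p\cdot\exp(tA)\in p\cdot GL_n$. Since $\mathcal L_{\tilde X}\omega=0$, the flow $\tilde\phi^t$ commutes with each $\omega^{-1}(v)$-flow and hence with the Cartan exponential $\exp_\cdot(v)$. Combined with the naturality identity $\exp_{p\cdot h}(v)=\exp_p(\mathrm{Ad}(h)v)\cdot h$ and the grading $\mathrm{Ad}(\exp(tA))\mathfrak g_{-1}\subset\mathfrak g_{-1}$, this yields
\[
\tilde\phi^t\bigl(\exp_p(v)\cdot g\bigr)=\exp_p\bigl(\mathrm{Ad}(\exp(tA))v\bigr)\cdot\exp(tA)\,g\in\exp_p(\mathfrak g_{-1})\cdot GL_n
\]
for all $v\in\mathfrak g_{-1}$ and $g\in GL_n$. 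Therefore $Q:=\exp_p(\mathfrak g_{-1})\cdot GL_n$ is a $\tilde\phi^t$-invariant $GL_n$-sub-bundle of $P$ over a neighborhood of $o$, and by Theorem \ref{thm:kobayashi} it equals $i_{\nabla'}(F^1(U))$ for the desired $\nabla'\in[\nabla|_U]$.
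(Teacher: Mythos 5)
Your reverse direction, the fiber-variation algebra, and the construction of the invariant sub-bundle $Q=\exp_p(\mathfrak{g}_{-1})\cdot GL_n$ once a point $p$ with $\omega_p(\tilde X)\in\mathfrak{g}_0$ is found are all correct, and they coincide with the paper's own argument (which uses exactly this sub-bundle together with Theorem \ref{thm:kobayashi}). But the step you yourself flag as ``the main technical difficulty'' is a genuine gap, and it is precisely where the paper does its real work. The rigidity fact you invoke --- that an infinitesimal automorphism is determined by its value at a single point of $P$ --- is a statement internal to one fixed Cartan geometry; it cannot by itself compare the curved geometry $(M,P,\omega)$ with the flat model, since the two automorphisms live on different bundles with different curvature, and linearizability of $X$ is a statement about arbitrary smooth coordinates on $M$, not about projective data. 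What is actually needed is the paper's Lemma \ref{lemma:normalform}: the coordinate form of $\phi^t$ in the projective normal coordinates $\sigma_p$ depends only on $\omega(\tilde X)(p)$, \emph{uniformly over all normal projective Cartan connections}. Its proof is not abstract rigidity: it exploits that $\phi^t$ preserves the unparametrized geodesics $\pi\circ\exp_p(sv)$, $v\in\mathfrak{g}_{-1}$, and that the reparametrization data $(r(s),g(s))$ solve the equation $v'=\mathrm{Ad}(g(s)^{-1})(r'(s)v'_{-1})+\omega_H(g'(s))$, in which $\omega$ does not appear, so uniqueness of solutions forces the flow to have the same coordinate form as in the flat model. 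Nothing in your proposal substitutes for this argument.

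There is a second, smaller gap hidden in your phrase ``a direct computation gives \dots which is linearizable precisely when $V$ lies in the row span of $W$.'' Only the ``if'' half is a computation (it is the paper's Remark \ref{rmk1}). The ``only if'' half --- non-linearizability when $V\notin\mathrm{image}\,\mathrm{ad}(W)$ --- is a dynamical statement, not an algebraic one, again because the linearizing diffeomorphism need not be projective. The paper proves it by noting that along a direction $w_k\in\mathrm{Ker}A$ with $\langle w,w_k\rangle\neq 0$ the flow is $\phi^t(sw_k)=\frac{s}{1+tas}w_k$ with $a\neq 0$, so $D\phi^t(o)$ fixes $w_k$ while points on the ray converge to $o$ as $t\rightarrow+\infty$, which is incompatible with linearizability by Lemma 4.6 of \cite{Frances}. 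Until both ingredients (the cross-geometry normal form of Lemma \ref{lemma:normalform} and this dynamical lemma) are supplied, the implication ``$X$ linearizable $\Rightarrow$ some $p\in\pi^{-1}(o)$ has $\omega_p(\tilde X)\in\mathfrak{g}_0$'' remains unproven, and your proof of the forward direction does not close.
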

\noindent
To prove the proposition above, we need the following. Denote $\omega$ the normal projective Cartan connection associated to $P=P(\nabla)$ as before. Fix any $p$ in the fibre of $o$, and let $\exp_p$ be the exponential map of $\omega$ at $p$. Then there is a small neighbourhood $U$ of $0\in \mathfrak{g}_{-1}\simeq\mathbb{R}^n$ such that $\sigma_p=\pi\circ \exp_p:U\rightarrow M$ gives a local coordinate of $M$ at $o$. We call such coordinates the projective normal coordinates of $P(\nabla)$ at $o$ with respect to $p$. The local section $\exp_p(U)$ gives a $GL_n$ sub-bundle of $P$ over $\sigma_p(U)$. Then it induces an affine connection $\nabla_U\in[\nabla\vert_U]$ near $o$. By Theorem $\ref{thm:kobayashi}$, $\sigma_p$ is also a normal coordinate for the affine connection $\nabla_U\in[\nabla\vert_U]$ at $o$.
\begin{lemma}
\label{lemma:normalform}
Suppose $X$ is a projective vector field for $\nabla$ such that $X_o=0$. Let $P=P(\nabla)$, and define $\omega$ on $P$ as before. Choose any $p\in \pi^{-1}(o)$, then in the projective normal coordinate $\sigma_p$ of $P$ with respect to $p$, regardless of the choice of the normal projective Cartan connection $\omega$ induced by the projective structure $[\nabla]$, the form of $\phi^t$ in the local coordinate $\sigma_p$ is uniquely determined by the value of $\omega(\tilde{X})(p)$ in the following sense:\\
\\
For any torsion-free affine connection $\widehat{\nabla}$ admitting a projective vector field $Y$ vanishing at $\hat{o}$, denote $\widehat{\omega}$ the associated normal projective Cartan connection of $P(\widehat{\nabla})$. If $\exists \hat{p}\in \pi^{-1}(\hat{o})$ such that $\omega(\tilde{X})(p)=\widehat{\omega}(\tilde{Y})(\hat{p})\in\mathfrak{h}$, then the flow $\phi^t_Y$ in the coordinate $\sigma_{\hat{p}}$ has the same form as $\phi^t$ in $\sigma_p$.
\end{lemma}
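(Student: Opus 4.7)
The plan is to use that $\tilde X$ is an infinitesimal automorphism of the Cartan geometry $(P,\omega)$, together with the fact that $\sigma_p$ is itself constructed from the Cartan exponential $\exp_p$, to express $\phi^t$ in the chart $\sigma_p$ through a composition of structural operations acting on $\xi=\omega(\tilde X)(p)\in\mathfrak{h}$ alone. Matching $\xi$ between the two triples $(X,\nabla,\omega,p)$ and $(Y,\widehat\nabla,\widehat\omega,\hat p)$ will then force the two coordinate expressions to coincide.

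Concretely I would proceed in three steps. First, the vanishing $X_o=0$ combined with the $H$-equivariance of $\tilde X$ and the third axiom of a Cartan connection shows that $\tilde X$ is tangent to the fibre $\pi^{-1}(o)$ at $p$ and restricts on $p\cdot H$ to the right-invariant vector field generated by $\xi$, whose flow is right translation, giving $\tilde\phi^t(p)=p\cdot\exp_H(t\xi)$. Second, $\mathcal L_{\tilde X}\omega=0$ implies that each constant vector field $\omega^{-1}(v)$ is $\tilde\phi^t$-invariant, which yields the intertwining $\tilde\phi^t(\exp_p(v))=\exp_{\tilde\phi^t(p)}(v)$ for every $v\in\hat{\mathfrak g}$ where both sides are defined. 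Third, the right-$H$-equivariance $R_h^*\omega=\operatorname{Ad}(h^{-1})\omega$ yields the exponential equivariance $\exp_{p\cdot h}(v)=\exp_p(\operatorname{Ad}(h)v)\cdot h$. Combining and projecting via $\pi$, so that the fibre factor $\exp_H(t\xi)$ vanishes, one arrives at
\[
\phi^t(\sigma_p(v))\ =\ \pi\!\left(\exp_p\!\bigl(\operatorname{Ad}(\exp_H(t\xi))v\bigr)\right),\qquad v\in\mathfrak{g}_{-1}.
\]

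The final move is to read this formula as a coordinate map. The inner algebraic map $v\mapsto\operatorname{Ad}(\exp_H(t\xi))v$ is manufactured from $\xi$ alone via the $|1|$-graded structure of $\hat{\mathfrak g}$ (so that it is in fact polynomial of degree at most two in $t$, since the gradation truncates the Ad-expansion). The outer operation $\pi\circ\exp_p$ agrees with $\sigma_p$ on $\mathfrak g_{-1}$ and, on the complementary subspace $\mathfrak h$, the flow of $\omega^{-1}(\eta)$ is vertical by the third Cartan axiom ($\Phi_\eta^s(p)=p\cdot\exp_H(s\eta)$), so the $\mathfrak h$-content of $\operatorname{Ad}(\exp_H(t\xi))v$ is absorbed by $\pi$. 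Running the identical derivation for $(Y,\widehat\omega,\hat p)$ under the hypothesis $\widehat\omega(\tilde Y)(\hat p)=\xi$ produces an expression of identical form in $\sigma_{\hat p}$. The main obstacle is exactly in this last step: one must check that the mixed term $\operatorname{Ad}(\exp_H(t\xi))v\in\mathfrak g_{-1}\oplus\mathfrak h$ contributes to $\pi\circ\exp_p$ only through the algebraic structure of the pair $(\hat{\mathfrak g},\mathfrak h)$ (the verticality of $\omega^{-1}(\mathfrak h)$ and a BCH-type decomposition of the Cartan exponential along $\mathfrak g_{-1}\oplus\mathfrak h$), and not through any piece of $\omega$ that differs between the two geometries. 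This is precisely the point at which one uses that $\sigma_p$ is the chart defined by the same Cartan exponential $\exp_p$, so that every geometric quantity appearing on the right-hand side is automatically read back in that chart.
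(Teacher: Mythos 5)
Your first three steps coincide with the paper's own opening moves: the fibrewise formula $\tilde\phi^t(p)=p\exp_H(t\xi)$, the intertwining $\tilde\phi^t(\exp_p(v))=\exp_{\tilde\phi^t(p)}(v)$ coming from $\mathcal L_{\tilde X}\omega=0$, and the equivariance $\exp_{ph}(v)=\exp_p(\mathrm{Ad}(h)v)\,h$, which combine into exactly the paper's identity $\phi^t(\sigma_p(v))=\pi\left(\exp_p(v')\right)$ with $v'=\mathrm{Ad}(\exp_H(t\xi))v$. The gap is the step you yourself flag as the main obstacle, and the mechanism you offer for it does not work. You want to evaluate $\pi\circ\exp_p$ on the mixed element $v'\in\mathfrak g_{-1}\oplus\mathfrak h$ using only verticality of $\omega^{-1}(\mathfrak h)$ and a ``BCH-type decomposition of the Cartan exponential.'' But in a curved Cartan geometry the constant vector fields do not bracket algebraically: one has $\omega([\omega^{-1}(u),\omega^{-1}(w)])=[u,w]-\kappa(\omega^{-1}(u),\omega^{-1}(w))$, so any Baker--Campbell--Hausdorff manipulation of $\exp_p$ along the splitting $\mathfrak g_{-1}\oplus\mathfrak h$ produces curvature corrections (since $\kappa$ is horizontal, the first bracket of $\omega^{-1}(v'_{-1})$ with a vertical field is still algebraic, but the iterated brackets that BCH requires pair two non-vertical arguments and pick up $\kappa$). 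These corrections are precisely the data that differ between the two geometries being compared, so this route cannot establish connection-independence: knowing $\pi\circ\exp_p$ separately on $\mathfrak g_{-1}$ (where it is $\sigma_p$) and on $\mathfrak h$ (where it is constant) does not determine it on sums, and your closing sentence about quantities being ``automatically read back in that chart'' is circular on exactly this point. (A smaller slip: $\mathrm{Ad}(\exp_H(t\xi))v=e^{t\,\mathrm{ad}\xi}v$ is not polynomial in $t$ when $\xi$ has a nontrivial $\mathfrak g_0$-component; the grading only truncates the $\mathfrak g_1$-direction.)

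What the paper does at this exact point is to re-inject the hypothesis that $\phi^{t_0}$ is a projective transformation. The curve $s\mapsto\pi(\exp_p(sv'))$ equals $\phi^{t_0}\circ\pi\circ\exp_p(sv)$, hence is an \emph{unparametrized geodesic} of $[\nabla]$ with initial direction $v'_{-1}$, the $\mathfrak g_{-1}$-component of $v'$; it therefore coincides as a set with the coordinate ray $s\mapsto\sigma_p(sv'_{-1})$. One may consequently write $\exp_p(sv')=\exp_p(r(s)v'_{-1})\,g(s)$ with $r(0)=0$, $g(0)=1$, $g(s)\in H$, and applying $\omega$ to the derivative of this identity gives $v'=\mathrm{Ad}(g(s)^{-1})(r'(s)v'_{-1})+\omega_H(g'(s))$, an ODE in $\mathbb R\times H$ whose coefficients involve only $v'$ and carry no trace of $\omega$ or its curvature. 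Uniqueness of solutions with the given initial conditions then makes $r(s)$, and hence the coordinate expression of $\phi^{t_0}$ (namely $sv\mapsto r(s)v'_{-1}$), depend only on $(v,\xi,t_0)$, which is the assertion of the lemma. Without this geodesic step, or some substitute that controls $\pi\circ\exp_p$ on mixed elements, your argument does not close.
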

\begin{proof}
Let $\tilde{X}$ be the lift of $X$ to $P$ such that $\mathcal{L}_{\tilde{X}}\omega=0$. Because $X_o=0$, we have $\omega(\tilde{X})(p)=v_h\in \mathfrak{h}$. Define the following identification along fibres over $o$:
$$\Delta:H\rightarrow pH,\quad h\mapsto ph.$$
It follows that $\Delta^*\omega\vert_{\pi^{-1}(o)}$ is the Maurer–Cartan form $\omega_H$ on $H$, by Definition \ref{def:cartan}. Let $X_h$ be a right-invariant vector field on $G$ with $\omega_G(X_h)(1)=v_h$. Note that $\omega_G(X_h)\vert_H\in \mathfrak{h}$, and $\mathcal{L}_{X_h}\omega_G=0$. It follows that $\Delta_*(X_h)=\tilde{X}\vert_{\pi^{-1}(o)}$.\\
\\
 Denote $\Phi$ the flow generated by $\tilde{X}$ on $P$, so $\Phi$ projects to a flow $\phi^t$ on $M$ fixing $o$. We have $\Phi(t,p)=ph(t)$, where the function $h(t)=\exp(tv_h)$. It is evident the function $h(t)$ depends only on $v_h$. Fix any $t_0\in \mathbb{R}$ and $v\in \mathfrak{g}_{-1}=\mathbb{R}^n$, and define the curve $l(s)=\exp_p(sv)$. Note that $\pi\circ l(s)$ is a geodesic of $[\nabla]$. Because $\mathcal{L}_{\tilde{X}}\omega=0$, we have the following: $$l_{t_0}(s):=\Phi(t_0,l(s))=\exp_{ph(t_0)}(sv).$$
 We also obtain
 $$\phi^{t_0}\circ \pi\circ l=\pi\circ l_{t_0}=\pi\circ R_{h(t_0)^{-1}}\circ l_{t_0}.$$
By the axioms of the Cartan connections, we get $$R_{h(t_0)^{-1}}\circ l_{t_0}(s)=\exp_p(s(Ad(h(t_0)(v)))).$$
Define $v'=Ad(h(t_0)(v))$, then $v'$ is totally determined by value of $v$ and $h(t_0)$. We define the curve $$f(s):=R_{h(t_0)^{-1}}\circ l_{t_0}.$$  Because $\pi \circ l(s)$ is geodesic of $[\nabla]$, one has $\pi\circ f(s)$ is also a geodesic of $[\nabla]$. Denote $v'_{-1}$ the $\mathfrak{g}_{-1}$ component of $v'$. One has $\pi \circ f(s)$ and $\pi\circ \exp_p(sv'_{-1})$ are geodesics for $[\nabla]$ with the same initial condition. It follows that  on a small interval $I$ containing $0$, $f(s):I\rightarrow P$ can be written in the following form:
$$f(s)=\exp_p(r(s)v'_{-1})g(s),\quad r(s):I\rightarrow \mathbb{R},\ g(s):I\rightarrow H.$$
$$r(0)=0,\ g(0)=1.$$
Differentiating the equation, we obtain
$$v'=\omega(\dfrac{df}{ds})=Ad(g(s)^{-1})(r'(s)v'_{-1})+\omega_H(g'(s)).$$
\\
Given a pair of functions $\lbrace r(s),g(s)\rbrace$, whether it is a solution to this equation depends only on $v'$, independent of the connection $\omega$. On the other hand, the definition of the exponential map implies that the solution $\lbrace r(s),g(s)\rbrace$ satisfying the condition $g(0)=1$ and $r(0)=0$ is unique. Note that $v'$ and $v'_{-1}$ only depend on $v$ and $h(t_0)$. It follows from the uniqueness that $\lbrace r(s),g(s)\rbrace$ depends only on $v$ and $h(t_0)$. In particular, the functions $r(t)$ and $v'\in \mathbb{R}^n$ depend only on the parameters $v,v_h,t_0$, regardless of the connection $\omega$. Given any two projective connections $\omega$ and $\omega'$ on the $H^2(n)$ bundle $P$, as long as the parameters $v,v_h,t_0$ are the same, we get the same the function $r(t)$ and $v'\in \mathbb{R}^n$. It follows that $\phi^{t_0}$ in the projective normal coordinates of $P$ with respect to $p$ depends only on $h(t_0)$. Let $Y$ be a projective vector field for $\widehat{\nabla}$ vanishing at $\hat{o}$ as in Lemma $\ref{lemma:normalform}$. Hence, there exists $\hat{p}\in \pi^{-1}(\hat{o})$ such that $\omega(\tilde{X})(p)=\widehat{\omega}(\tilde{Y})(\hat{p})$ implies $\phi^t_Y$ in $\sigma_{\hat{p}}$ has the same form as $\phi^t$ in $\sigma_p$. This completes the proof.
\end{proof}
\noindent
Suppose $X$ is a projective vector field for $(M,\nabla)$ vanishing at $o$, and fix any $p\in\pi^{-1}(o)$. As before, choose some right invariant vector field $\tilde{Y}$ on $G$ such that $\omega_G(\tilde{Y})(1)=\omega(\tilde{X})(p)\in \mathfrak{h}$, and let $Y$ be the projection of $\tilde{Y}$ on $\mathbb{RP}^n$. Then $X$ in the projective normal coordinates of $P$ with respect to $p$ has the same form of $Y$ in the projective normal coordinates of the flat model with respect to $1\in G$. Note the algebra of the projective vector fields has maximal dimension on the flat model. Thus by computations on the flat model, we obtain all possible forms of projective vector fields with a singularity at $o$ in the projective normal coordinates of $P$ with respect to $p$.
\begin{lemma}
\label{lemma:linear}
Let $X$ be a projective vector field for $(M,\nabla)$ with $X_o=0$. For any $p\in \pi^{-1}(o)$, $X$ has the following form in the projective normal coordinates of $P(\nabla)$ with respect to $p$. 
$$X_x=Ax+\langle w,x\rangle x,\quad A\in M_n(\mathbb{R}),\ w\in \mathbb{R}^n.$$
In addition, $X$ is linearizable if and only if $w\in \mathrm{Im}A^T$.
\end{lemma}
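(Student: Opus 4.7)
The plan is to carry out an explicit computation on the flat model $(\mathbb{RP}^n, G, \omega_G)$ and then transfer everything to $(M, \nabla)$ via Lemma \ref{lemma:normalform}; the linearizability criterion is then read off from the adjoint action of $H$ on $\mathfrak{h}$ together with Proposition \ref{essential}.

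First I would derive the normal form on the flat model, using $o = [1{:}0{:}\cdots{:}0]$, $p = 1 \in G$, and the standard chart $i_0$, which coincides with $\sigma_p$. Any projective vector field vanishing at $o$ is the $\mathbb{RP}^n$-projection of a right-invariant vector field on $G$ whose value at the identity lies in $\mathfrak{h}$, so we may write $v_h = A \oplus w^* \in \mathfrak{g}_0 \oplus \mathfrak{g}_1$. Via the embedding \eqref{eqn2}, differentiating $t \mapsto \exp(t v_h) \cdot (1, y^T)^T$ at $t = 0$ in the affine coordinates $y_i = x_i/x_0$ makes the $\tfrac{1}{n+1}\mathrm{tr}(A)$ contributions from the two diagonal blocks cancel, and one obtains $X(y) = A y + \langle w, y \rangle y$ (up to the identification of $\mathfrak{g}_1$ with $\mathbb{R}^n$ and the sign convention in that identification). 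Lemma \ref{lemma:normalform} then transfers this form verbatim to the general setting: for any $(M, \nabla)$ and projective vector field $X$ with $X_o = 0$ and $\omega(\tilde X)(p) = v_h$, the expression of $X$ in $\sigma_p$ is $A x + \langle w, x \rangle x$.

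For the ``$\Leftarrow$'' direction of the linearizability criterion, the idea is to exploit the $H$-right-invariance of $\tilde X$: changing $p$ to $p h$ for $h \in H$ replaces $\omega(\tilde X)(p)$ by $\mathrm{Ad}(h^{-1}) v_h$, and by Lemma \ref{lemma:normalform} the form of $X$ in $\sigma_{p h}$ is the one associated to $\mathrm{Ad}(h^{-1}) v_h$. Using the grading $[\mathfrak{g}_1, \mathfrak{g}_1] = 0$ and $[\mathfrak{g}_0, \mathfrak{g}_1] \subseteq \mathfrak{g}_1$, a direct bracket computation in $\mathfrak{sl}(n+1, \mathbb{R})$ yields $[A, v'^*] = -(A^T v')^*$ for $v' \in \mathbb{R}^n \simeq \mathfrak{g}_1$, so
\[
\mathrm{Ad}(\exp(-v'))(A + w^*) \;=\; A + (w - A^T v')^*.
\]
Whenever $w = A^T v'$ is solvable, choosing $h = \exp(-v')$ produces the linear expression $X(y) = A y$ in $\sigma_{p h}$, and $X$ is linearizable via the diffeomorphism $\sigma_p \circ \sigma_{p h}^{-1}$.

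For the ``$\Rightarrow$'' direction, I would invoke Proposition \ref{essential}: linearizability at $o$ is equivalent to the existence of $\nabla' = \nabla_U + \eta \otimes \mathrm{Id} + \mathrm{Id} \otimes \eta \in [\nabla|_U]$ with $\mathcal{L}_X \nabla' = 0$. Since $\sigma_p$ is a normal coordinate for $\nabla_U$, the Christoffel symbols of $\nabla_U$ vanish at $o$, so $(\mathcal{L}_X \nabla_U)^k_{ij}(o) = \partial_i \partial_j X^k(o) = w_i \delta^k_j + w_j \delta^k_i$. A short calculation using $X_o = 0$ and $D X_o = A$ gives $(\mathcal{L}_X \eta)(o) = A^T \eta(o)$, and evaluating $\mathcal{L}_X \nabla'(o) = 0$ component by component forces $w + A^T \eta(o) = 0$, hence $w \in \mathrm{Im}\, A^T$. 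The main technical obstacle is maintaining sign conventions consistent across the three computations (the action on $\mathbb{RP}^n$, the bracket on $\mathfrak{h}$, and the Lie derivative of $\nabla'$) so that the $H$-orbit criterion and the Lie derivative criterion both reduce to the single condition $w \in \mathrm{Im}\, A^T$.
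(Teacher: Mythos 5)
Your derivation of the normal form on the flat model and your proof of the ``$\Leftarrow$'' direction are correct and essentially the same as the paper's: the paper also reduces to the flat model via Lemma \ref{lemma:normalform}, and its Remark \ref{rmk1} performs exactly your adjoint-action computation (conjugating $\omega(\tilde X)(p)$ by the element $C=\exp(-w_i^*)\in H$, i.e.\ replacing $p$ by $ph$ in the fibre, removes the $\mathrm{Im}A^T$-part of $w$).

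The ``$\Rightarrow$'' direction, however, is circular. You invoke Proposition \ref{essential} in the direction ``$X$ linearizable at $o$ $\Rightarrow$ there exists $\nabla'\in[\nabla\vert_U]$ with $\mathcal{L}_X\nabla'=0$'', but in the paper this is precisely the direction of Proposition \ref{essential} that is \emph{deduced from} Lemma \ref{lemma:linear}: the paper's proof of that proposition begins ``If $X$ is linearizable at $o$, we have $w\in\mathrm{Im}A^T$ by Lemma \ref{lemma:linear}'', and only then produces $\nabla'$ from the $\tilde X$-invariant $GL_n$ sub-bundle $\exp_p(\mathfrak{g}_{-1})$; the only direction of the proposition that is independent of the lemma is the trivial converse (an affine field vanishing at $o$ is linear in normal coordinates of $\nabla'$). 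Your Lie-derivative computation at $o$ is correct, but all it establishes is that the forward direction of Proposition \ref{essential} implies the forward direction of Lemma \ref{lemma:linear}; since the paper derives the former from the latter, the two are equivalent and you have proved neither. What is missing is an argument that does not pass through Proposition \ref{essential}. The paper supplies one dynamically: if $w\notin\mathrm{Im}A^T$, write $w=w_k+w'$ with $0\neq w_k\in\mathrm{Ker}A$; in $\sigma_p$ the flow satisfies $\phi^t(sw_k)=\frac{s}{1+tas}\,w_k$ with $a\neq 0$, so points on one side of the $w_k$-axis converge to $o$ under the flow while $D\phi^t(o)(w_k)=w_k$, and Lemma 4.6 of \cite{Frances} then excludes linearizability. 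To salvage your approach you would either need this dynamical argument, or an independent algebraic proof that smooth linearizability forces the quadratic term $\langle w,x\rangle x$ to be removable in a way compatible with $w\in\mathrm{Im}A^T$ --- an additional nontrivial step you have not given.
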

\begin{proof}
Let $X$ be a projective vector field for $(M,\nabla)$ such that $X_o=0$, and choose any $p\in \pi^{-1}(o)$. First we show $X$ has the form: $X_x=Ax+\langle w,x\rangle x$ in the projective normal coordinates of $P(\nabla)$ with respect to $p$. By Lemma \ref{lemma:normalform} and the argument in the previous paragraph, we only need to show for the flat bundle $P=(\mathbb{RP}^n,G,\omega_G)$, $X$ is in this form in the projective normal coordinates with respect to $p=1\in G$. In this case, the exponential map $\exp_p$ gives the canonical coordinate $i_0^{-1}$ of $\mathbb{RP}^n$ at $e_0$ defined on Page $\pageref{i0}$. The projective vector fields fixing $o=e_0\in \mathbb{RP}^n$ are induced by linear vector fields in $\mathbb{R}^{n+1}$ fixing the line $e_0$. Projecting these vector fields to $\mathbb{RP}^n$, we get $X$ has the form $X_x=Ax+\langle w,x \rangle x$ in the projective normal coordinates with respect to $p$.\\
\\
 Next we show $X$ in this form is linearizable if and only if $w\in \mathrm{Im}A^T$. If $w\notin \mathrm{Im}A^T$, we have $w=w_k+w'$ with $w_k\neq 0$, where $w_k\in \mathrm{Ker}A$ and $w'\in \mathrm{Im}A^T$. Denote $\phi^t$ the flow generated by $X$ as usual. In the projective normal coordinates of $P(\nabla)$ with respect to $p$, for some small interval $I$ containing $0$, we have $$\phi^t(sw_k)=\dfrac{s}{1+tas} w_k,\quad s\in I,\ a\neq 0.$$
Note that $D\phi^t(o)(w_k)=w_k\neq 0$. Without loss of generality, we can assume $a>0$. For $s>0$, we have $\dfrac{s}{1+tas}\rightarrow 0$ as $t\rightarrow +\infty$. Then $X$ is not linearizable by Lemma 4.6 of $\cite{Frances}$. Conversely, if $w\in \mathrm{Im}A^T$, the calculation in Remark $\ref{rmk1}$ below shows that one can find $p'\in\pi^{-1}(o)$ such that $X_x=(A_{p'})x$ in the projective normal coordinates with respect to $p'$. Hence it is linearizable.
\end{proof}
\begin{remark}
\label{rmk1}
To simply the calculations later, suppose $X$ vanishes at $o$. Note that for any $A\in M_n(\mathbb{R})$, we can write $\mathbb{R}^n=Im(A^T)\bigoplus \mathrm{Ker}A$. Then for any $p\in \pi^{-1}(o)$, this decomposition of $\mathbb{R}^n$ gives
$$S_p=\omega(\tilde{X})(p)=\begin{bmatrix}
-b && w_i^T A+w_k^T\\
0 && B
\end{bmatrix}\in \mathfrak{sl}_{n+1}(\mathbb{R}).$$
$$\ A=B+b\cdot Id,\  w_k\in \mathrm{Ker}A.$$
Define $C=\begin{bmatrix}
1 && -w_i^T\\
0 && Id
\end{bmatrix}$, we have $CS_pC^{-1}=\begin{bmatrix}
-b && w_k^T\\
0 && B
\end{bmatrix}$. In other words, given any local coordinate $\tilde{\sigma}:U\subset \mathbb{R}^n\rightarrow M$, with $\tilde{\sigma}(0)=o$, we can choose some $\tilde{p}\in\pi^{-1}(o)$ such that the projective normal coordinate $\sigma_{\tilde{p}}$ with respect to $\tilde{p}$ of $P$ satisfies:
$$J^1(\tilde{\sigma})(0)=J^1(\sigma_{\tilde{p}})(0),\quad ((\sigma_{\tilde{p}}^{-1})_*X)_x=Ax+\langle w,x \rangle x,\ w\in \mathrm{Ker}A.$$
\end{remark}
\noindent
With the results above, we can prove Proposition \ref{essential}.
\begin{proof}[Proof of Proposition $\ref{essential}$]
By Remark \ref{rmk1}, we can always choose some $p\in \pi^{-1}(o)$ such that in the projective normal coordinate $\sigma_p$ of $P(\nabla)$ with respect to $p$, $X$ has the following form:$$X_x=Ax+\langle w,x\rangle x,\ w\in \mathrm{Ker}A.$$ If $X$ is linearizable at $o$, we have $w\in \mathrm{Im}A^T$ by Lemma \ref{lemma:linear}. It follows that $w=0$, then $X$ is linear in $\sigma_p$ and $\omega(\tilde{X})(p)\in\mathfrak{g}_0$. According to Theorem $\ref{thm:kobayashi}$ by Nagano and Kobayashi, the $GL_n$ sub-bundle $P_1$ of $F^2(M)$ induced by the local section $\exp_p(\mathfrak{g}_{-1})$ corresponds to a connection $\nabla'$ projectively equivalent to $\nabla$ locally defined near $o$. Then $P_1\subset P(\nabla)$ is invariant by the flow of $\tilde{X}$ because $\omega(p)(\tilde{X})\in \mathfrak{g}_0$. We have
 $$\tilde{X}\vert_{P_1}\subset TP_1,\quad \mathcal{L}_{\tilde{X}}\theta^i_j\vert_{P_1}=\mathcal{L}_{\tilde{X}}\omega^i_j\vert_{P_1}=0.$$
  Hence, $X$ is affine for $\nabla'$ by the last statement of Theorem $\ref{thm:kobayashi}$.
The converse is trivial as affine vector fields of $\nabla'$ vanishing at $o$ are clearly linear in the normal coordinates of $\nabla'$ at $o$.
\end{proof}
\noindent
Suppose that $X$ is a non-linearizable projective vector field for $(M,\nabla)$ vanishing at $o\in M$. For each $a>0$, we can choose a neighbourhood $U_a$ of $o$ such that $\phi^t$ is well defined on $U_a$ for $t\in I=[-a,a]$. One has on $U_a$, $\nabla_t=\phi^t_*\nabla$ is projectively equivalent to $\nabla$ for $t\in I$. If $\gamma(s)$ is a geodesic segment for $\nabla$ contained in $\phi^{t_0}(U_a)$ with $t_0\in I$, then $\phi^{-t_0}\circ \gamma(s)$ is a geodesic segment on $U_a$ for $\nabla_{t_0}$. This leads to the following:
\begin{corollary}
\label{cor:2.1.1}
Let $X$ be a projective vector field for $(M,\nabla)$ admitting a non-linearizable vanishing point $o\in M$. Then for each $t\neq0$, we have $$\nabla_t=\nabla+\eta_t\otimes Id +Id\otimes \eta_t,\quad (\eta_t)_o\neq 0.$$
\end{corollary}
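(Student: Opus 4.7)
The plan is to prove the corollary by contradiction: supposing $(\eta_{t_0})_o = 0$ for some $t_0\neq 0$, I will derive that $X$ is actually linearizable at $o$ (equivalently that $w=0$ in the adapted frame below), contradicting the hypothesis.

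Step 1 would be to reinterpret $(\eta_{t_0})_o=0$ in Cartan-bundle terms. Let $P=P(\nabla)$, $\omega$ the normal projective Cartan connection, and $\tilde X$ the canonical lift as in Section~\ref{sec:prelim-dynamics}. By Theorem~\ref{thm:kobayashi}, $\nabla$ corresponds to a $GL_n$-reduction $P_\nabla\subset P$, and choosing $p\in P_\nabla\cap \pi^{-1}(o)$, the fibre of $P_\nabla$ over $o$ is $p\cdot GL_n$. Since $\tilde\phi^t$ is an $H^2(n)$-bundle automorphism preserving $\omega$, the pushforward $\nabla_t=\phi^t_*\nabla$ corresponds to $\tilde\phi^t(P_\nabla)$, whose fibre over $o$ is $ph(t)\cdot GL_n$ with $h(t)=\exp(tv_h)\in H$ and $v_h:=\omega(\tilde X)(p)\in\mathfrak{h}$. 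The vanishing $(\eta_t)_o=0$ is equivalent to agreement of the Christoffel symbols of $\nabla$ and $\nabla_t$ at $o$, i.e.\ to coincidence of these two $GL_n$-cosets in $\pi^{-1}(o)$, which is the same as $h(t)\in GL_n\subset H$.

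Step 2 would be to apply Remark~\ref{rmk1}: pick $p$ so that $v_h=A+v^*$ with $A\in\mathfrak{g}_0\cong\mathfrak{gl}_n$ and $v^*\in\mathfrak{g}_1$ identified with a vector $w\in\ker A$; non-linearizability is precisely $w\neq 0$. Factoring $h(t)=\exp(tA)\exp(u(t))$ inside $H=GL_n\ltimes\mathbb{R}^n$, the relation $h'(t)=h(t)v_h$ combined with $[\mathfrak{g}_1,\mathfrak{g}_1]=0$ and $\operatorname{ad}(A)|_{\mathfrak{g}_1}=-A^T$ (read off directly from \eqref{eqn2}) yields the linear ODE $u'(t)=v^*+A^T u(t)$, $u(0)=0$. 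Its explicit solution is $u(t)=t\,e^{tA^T}\,\psi(-tA^T)\,v^*$ with $\psi(x)=(e^x-1)/x$, so the hypothesis $h(t_0)\in GL_n$ becomes $\psi(-t_0 A^T)\,v^*=0$.

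Step 3 would be to apply the Fitting decomposition $\mathbb{R}^n=V_0\oplus V_1$ of $A^T$ with $V_0=\ker(A^T)^N$ and $V_1=\operatorname{Im}(A^T)^N$ for large $N$. Since $A^T|_{V_0}$ is nilpotent, $\psi(-t_0A^T|_{V_0})=I+(\text{nilpotent})$ is invertible; hence the $V_0$-component of $v^*$ vanishes and $v^*\in V_1$. On the other hand, for any $v=(A^T)^N u\in V_1$ with $Av=0$,
\[
\|v\|^2 = \langle (A^T)^N u,v\rangle = \langle u, A^N v\rangle = 0,
\]
so $\ker A\cap V_1=\{0\}$; thus $v^*=0$, i.e.\ $w=0$, contradicting non-linearizability. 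The main obstacle will be Step 3: because $\psi$ vanishes at $2\pi ik$ for $k\in\mathbb{Z}_{\neq 0}$, the operator $\psi(-t_0 A^T)$ can genuinely annihilate nonzero vectors whenever $A^T$ possesses purely imaginary eigenvalues of the form $2\pi ik/t_0$, and the subtle content of the argument is that such vectors cannot lie in $\ker A$—this is exactly what the inner-product identity $\langle(A^T)^N u,v\rangle=\langle u,A^N v\rangle$ enforces.
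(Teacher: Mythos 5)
Your strategy is genuinely different from the paper's: the paper fixes $p\in P_\nabla\cap\pi^{-1}(o)$, replaces $\nabla$ by the connection induced by $\exp_p(\mathfrak{g}_{-1})$, and derives a contradiction by comparing $2$-jets of exponential maps along the invariant geodesic in a $\ker A$ direction, where the flow acts by the M\"obius reparametrization $s\mapsto s/(1+as)$, $a\neq 0$. Your route is algebraic: translate $(\eta_{t_0})_o=0$ into the coset condition $h(t_0)\in GL_n$ inside $H$, solve the ODE for the $\mathfrak{g}_1$-part of $h(t)$, and apply the Fitting decomposition of $A^T$. Steps 1 and 3 are individually sound, and the ODE $u'=v^*+A^Tu$, $u(t)=t\psi(tA^T)v^*$ is correct. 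However, there is a genuine incompatibility between Step 1 and Step 2. The equivalence ``$(\eta_{t_0})_o=0 \iff h(t_0)\in GL_n$'' is valid only when $v_h=\omega(\tilde X)(p)$ is read at a point $p$ lying in the reduction $P_\nabla$. The normalization of Remark~\ref{rmk1}, which puts the $\mathfrak{g}_1$-component into $\mathrm{Ker}A$, is achieved by moving to $\tilde p=pC^{-1}$ with $C^{-1}\in\exp(\mathfrak{g}_1)$; this point has the same underlying $1$-frame as $p$ but a different $2$-frame, so $\tilde p\notin P_\nabla$ in general. At $\tilde p$ the correct criterion is not $\psi(-t_0A^T)w_k=0$ but $\exp\bigl(t_0\,\omega(\tilde X)(\tilde p)\bigr)\in C\,GL_n\,C^{-1}$, a different condition: writing $w=A^Tw_i+w_k$ at $p$, the two conditions differ by $\psi(t_0A^T)A^Tw_i=t_0^{-1}\bigl(e^{t_0A^T}-I\bigr)w_i$, which is nonzero in general. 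Since your final contradiction ($v^*\in\mathrm{Ker}A\cap V_1=\{0\}$, hence $w=0$) genuinely uses $v^*\in\mathrm{Ker}A$, the proof as written does not go through.

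The repair is immediate and actually shortens the argument: do not normalize at all. Keep $p\in P_\nabla\cap\pi^{-1}(o)$; by Lemma~\ref{lemma:linear}, non-linearizability then reads $w\notin \mathrm{Im}A^T$ (rather than $w\neq 0$), and your Steps 1--2 give $\psi(-t_0A^T)w=0$. Your own Fitting argument kills the $V_0$-component (as $\psi$ of a nilpotent matrix is invertible) and yields $w\in V_1=\mathrm{Im}(A^T)^N\subset \mathrm{Im}A^T$, directly contradicting $w\notin\mathrm{Im}A^T$; the inner-product identity $\langle (A^T)^Nu,v\rangle=\langle u,A^Nv\rangle$ becomes unnecessary. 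With this modification your proof is correct, handles the $\psi$-vanishing subtlety you flagged (kernel vectors of $\psi(-t_0A^T)$ live in $V_1\subset\mathrm{Im}A^T$, which is exactly the linearizable locus), and is an attractive purely Lie-theoretic alternative to the paper's geometric argument.
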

\begin{proof}
Suppose that $\eta_{t_0}(o)=0$ for some $t_0\neq 0$. The connection $\nabla$ induces a $GL_n$ sub-bundle $P_1$ of $P(\nabla)$. Choose $p\in \pi^{-1}(o)\cap P_1$. 
 Since $X$ is non-linearizable at $o$, in the coordinate $\sigma_p$, we may write
$$X_x=Ax+\langle w,x\rangle x,\ 0\neq w\notin \mathrm{Im}A^T.$$
Let $\nabla_p$ be the connection induced by the local section $\exp_p(\mathfrak{g}_{-1})$ at $p$. Then the type (2,1)-tensor $(\nabla_p-\nabla)$ vanishes at $o$. Thus we can assume that $\nabla$ is $\nabla_p$  in this proof. In the normal coordinates of $\nabla$ at $o$, denote $\overline{\Gamma^k_{i,j}}$ and $\Gamma^k_{i,j}$ the Christoffel symbols of $\nabla$ and $\nabla_{t_0}$, respectively. It follows that $\overline{\Gamma^k_{i,j}}(o)=\Gamma^k_{i,j}(o)=0$, because of $(\eta_{t_0})_o=0$. By calculations of the proof of Theorem $\ref{thm:kobayashi}$ of Nagano and Kobayashi in $\cite{Nagano}$, the exponential maps of $\nabla$ and $\nabla_{t_0}$ at $o$ have the same 2-jet. Denote $\exp^{\nabla}_o$ and $\exp^{\nabla_{t_0}}_o$ the exponential maps of $\nabla$ and $\nabla_{t_0}$ at $o$, respectively. Note $\sigma_p$ is a normal coordinate of $\nabla$ at $o$.
In the coordinate $\sigma_p$, choose $w_k\in \mathrm{Ker}A$ with $\langle w_k,w\rangle\neq 0$. Then in the coordinate $\sigma_p$, the curve $\gamma(s)=sw_k$ is a non-trivial parametrized geodesic of $\nabla$. Then There exists some $s_0>0$ such that $\gamma^{t_0}(s)=\phi^{-t_0}\circ\gamma(s)$ is well defined for $\vert s\vert<s_0$. Note that $w_k\in \mathrm{Ker}A$ implies the flow $\phi^t$ preserves the unparametrized geodesic $\gamma$. Because $\langle w, w_k\rangle \neq 0$, we have in $\sigma_p$:
$$\gamma^{t_0}(s)=\phi^{-t_0}\circ \gamma(s)=\dfrac{s}{1+as}w_k,\ a\neq 0.$$
Then near $s=0$, we can define the function $$f(s):=(\gamma^{-1}\circ\gamma^{t_0})(s)=\dfrac{s}{1+as}.$$
It is a local diffeomorphism fixing $0\in\mathbb{R}$.
The map $\phi^{-t_0}$ takes geodesics of $\nabla$ to geodesics of $\nabla_{t_0}$, so $\gamma^{t_0}(s)$ is a geodesic for $\nabla_{t_0}$ such that
 $$(\gamma^{t_0})'(0)=(\gamma)'(0)=w_k.$$
  Near $s=0$, we have $$\gamma(s)=\exp^{\nabla}_o(sw_k),\ \gamma^{t_0}(s)=\exp^{\nabla_{t_0}}_o(sw_k).$$ The exponential maps $\nabla$ and $\nabla_{t_0}$ have the same 2-jets at $o$, so $\gamma(s)$ and $\gamma^{t_0}(s)$ have the same 2-jets at $s=0$. This implies the function $f(s)$ has a trivial 2-jet at $s=0$. But we have 
 $$\dfrac{d^2}{ds^2}\bigg\lvert_{s=0}f(s)=-2a\neq0.$$ 
   Thus we have a contradiction.
\end{proof}
\subsection{BM-structures and Degree of mobility}
\label{sec:deg}
In general, there is an affine bijection between the elements in a given projective class $[\nabla]$ on the manifold $M^n$ and the 1-forms on $M^n$. The latter is an infinite dimension vector space, and is hard to analyse. So our focus is to study the metrizable elements of $[\nabla]$, where $\nabla$ is a Levi-Civita connection. From now on, let $g$ be a semi-Riemannian metric on $M^n$, and denote $\nabla$ its Levi-Civita connection.\\
\\
For any metric $\overline{g}$ on $M$, the $g$-strength of $\overline{g}$ is defined to be the (1,1)-tensor $K_{\overline{g}}$ such that
$$\overline{g}(u,v)=g\left(\dfrac{K_{\overline{g}}^{-1}}{\vert det(K_{\overline{g}})\vert}\cdot u,v\right).$$
We define a map $\rho(g)$ from the space of metrics on $M$ to the space of  non-degenerate $g$-adjoint (1,1)-tensors on $M$ as follows:
$$\rho(g)(\overline{g})=K_{\overline{g}}.$$
Clearly $\rho(g)$ is a bijection from the metrics on $M$ to the non-degenerate $g$-adjoint (1,1)-tensors on $M$.\\
\\
Let $f:M^n\rightarrow N^n$ be a smooth embedding. Fix metrics $g_1$ on $M$ and $g_2$ on $N$, respectively. We define the linear map $\rho^f(g_1,g_2):T^{1,1}N\rightarrow T^{1,1}M$ by 
$$\rho^f(g_1,g_2)(T)=f_*T\circ (\rho(g_1)(f^*g_2)).$$
Analogous to Fact 2.1 of $\cite{zig}$, if $g_2'$ is a metric on $N$, we have
$$\rho^f(g_1,g_2)(\rho(g_2)(g_2'))=\rho(g_1)(f^*g_2').$$
The map defined above is multiplicative in the following sense: Let $f_1:N_1\rightarrow N_2$, and $f_2:N_2\rightarrow N_3$ be smooth embeddings. Fix metrics $g_i$ on $N_i$, then we have
\begin{eqnarray}
\label{eqn3}
\rho^{f_1 \circ f_2}(g_1,g_3)=\rho^{f_1}(g_1,g_2)\circ \rho^{f_2}(g_2,g_3).
\end{eqnarray}
To proceed, we need the following definitions from Section 2 of $\cite{matrem}$:
\begin{definition}
\label{def:BM}
Suppose $g$ is a metric on $M^n$, the space of BM-structures on $M$ for $g$, denoted as $B(M,g)$, is the space of $g$-adjoint (1,1)-tensors on $M$ satisfying the following linear PDE, $\forall u,v,w\in T_xM,\ \forall x\in M$:
\begin{eqnarray}
\label{eqn4}
g((\nabla_w K)u,v)=\dfrac{1}{2}(d(trK)(u)g(v,w)+d(trK)(v)g(u,w)).
\end{eqnarray}
The degree of mobility of $g$ on $M^n$, denoted as $D(M^n,g)$, is the dimension of the vector space $B(M^n,g)$.
\end{definition}
\noindent
According to Equation (7)-(9) of $\cite{matpse}$, the non-degenerate elements of $B(M,g)$ are exactly the $g$-strength of the metrics projectively equivalent to $g$ on $M$. Equation \eqref{eqn4} is finite-type by Remark 5 of $\cite{matpse}$, so the solutions on each connected component are uniquely determined by the k-th jet at a single point for some $k\in\mathbb{N}$. Then we always have $D(M^n,g)<\infty$. In fact, according to Section 3 of $\cite{metric}$ , $[\nabla]$ defines a linear connection on some vector bundle $VM\simeq\bigodot^2 TM\oplus TM\oplus C^{\infty}(M)$. By Theorem 3.1 of $\cite{metric}$, solutions to Equation \eqref{eqn4} are 1-1 correspondence with parallel sections on $VM$. Hence, if $M^n$ is connected, we have $D(M^n,g)$ is at most the rank of $VM$: $$D(M^n,g)\leq \dfrac{(n+1)(n+2)}{2}.$$\\
\\We give a brief review of the Splitting Lemma used later in this paper. Given any $K\in B(M^n,g)$, denote $\chi(t)$ its characteristic polynomial in $t$. We say $\chi(t)$ admits an admissible factorization at $x\in M$, if $\chi(x)(t)=\chi_1(x)(t)\cdot\chi_2(x)(t)$, where $\chi_1(x)(t)$ and $\chi_2(x)(t)$ are non-constant polynomials in $t$ such that $\chi_1(x)(t)=0$ and $\chi_2(x)(t)=0$ have no common root. Since the eigenfunctions of $K$ can be chosen continuously, we have $\chi(t)$ admits such an admissible factorization on some neighbourhood $U_x$ of $x$. Then the following Splitting Lemma from $\cite{splitting}$ allows us to write the pair $(g,K)$ in block diagonal forms on $U_x$.
\begin{lemma}[Matveev, Bolsinov $\cite{splitting}$]
\label{lemma:splitting}
Suppose the characteristic polynomial $\chi(t)$ of $K\in B(M^n,g)$ admits an admissible factorization $\chi(t)=\chi_1(t)\chi_2(t)$ on some neighbourhood $U_x$ of $x$. Then there are local coordinates $(x_1,\cdots,x_r,y_1,\cdots,y_{n-r})$ at $x$ such that the pair $(g,K)$ can written in the following block diagonal form:
$$g=\begin{bmatrix}
h_1\chi_2(K_1) && 0\\
0 && h_2\chi_1(K_2)
\end{bmatrix},\quad K=\begin{bmatrix}
K_1 && 0\\
0 && K_2
\end{bmatrix},$$
where the pairs $(h_1,K_1)$ and $(h_2,K_2)$ depend only on the $x_i$ and $y_j$ coordinates, respectively. In addition, let $E_1$ and $E_2$ be distributions spanned by $\lbrace \partial x_i\rbrace_{i=1}^r$ and $\lbrace \partial y_j\rbrace_{j=1}^{n-r}$, respectively. Then $K_i$ is a BM-structure with the characteristic polynomial $\chi_i(t)$ for $h_i$ on each integral submanifold of $E_i$, respectively.
\end{lemma}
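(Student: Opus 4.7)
The plan is to build the splitting in three stages: first algebraically from $K$ alone, then geometrically via Frobenius to obtain product coordinates, and finally identifying the precise warping factors $\chi_2(K_1), \chi_1(K_2)$ from the BM equation itself.

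\textbf{Stage 1 (algebraic splitting).} First I would set $E_1 := \ker \chi_1(K)$ and $E_2 := \ker \chi_2(K)$ on $U_x$. Because $\chi_1$ and $\chi_2$ share no common root, Bezout yields polynomials $p_1, p_2$ with $p_1\chi_1 + p_2\chi_2 = 1$, so $TU_x = E_1 \oplus E_2$ as a $K$-invariant direct sum of constant-rank distributions. Since $K$ is $g$-adjoint and the eigenvalues of $K$ on $E_1$ and on $E_2$ are disjoint, the polynomial decomposition in $K$ combined with $\chi_2(K)|_{E_2} = 0$ forces $g(E_1, E_2) = 0$. Hence $g$ itself is block diagonal in any frame adapted to $E_1 \oplus E_2$.

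\textbf{Stage 2 (integrability and product coordinates).} Using that $\nabla_w K$ is $g$-symmetric (since $g$ is $\nabla$-parallel and $K$ is $g$-symmetric), I would rewrite \eqref{eqn4} as
\[
(\nabla_w K)u = \tfrac{1}{2}\bigl(d(\mathrm{tr}\,K)(u)\,w + g(u,w)\,\mathrm{grad}(\mathrm{tr}\,K)\bigr).
\]
Expanding the Nijenhuis tensor $N_K(u,v) = [Ku, Kv] - K[Ku,v] - K[u,Kv] + K^2[u,v]$ and substituting the formula above shows every resulting term cancels, so $N_K \equiv 0$. Vanishing of $N_K$ together with $K$-invariance makes each generalized eigendistribution $E_i$ involutive, and Frobenius produces coordinates $(x_1, \ldots, x_r, y_1, \ldots, y_{n-r})$ with $E_1 = \mathrm{span}\{\partial_{x_i}\}$ and $E_2 = \mathrm{span}\{\partial_{y_j}\}$, in which $K$ is block diagonal with blocks $K_1, K_2$. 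Plugging mixed pairs of coordinate vectors into the BM equation, and using the split $d(\mathrm{tr}\,K) = d(\mathrm{tr}\,K_1) + d(\mathrm{tr}\,K_2)$, forces $\partial_{y_j} K_1 = 0$ and $\partial_{x_i} K_2 = 0$, so the blocks depend only on their own coordinates.

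\textbf{Stage 3 (warping factors and induced BM structures).} I would write the first diagonal block of $g$ as $h_1 L_1$, where $L_1$ is $g$-symmetric and commutes with $K_1$ (both forced by the joint diagonalization properties). Testing \eqref{eqn4} on a triple $(u,v,w)$ with $u,w \in E_1$ and the trace perturbation coming from $d(\mathrm{tr}\,K_2)$ pins down the $y$-dependence of $h_1 L_1$: the only way the $\chi_2$-contribution can be absorbed into a $y$-independent $h_1$ is $L_1 = \chi_2(K_1)$ up to a scalar swallowed into $h_1$. The symmetric argument identifies the second block as $h_2 \chi_1(K_2)$. Restricting \eqref{eqn4} to an integral leaf of $E_i$ then shows $K_i$ is a BM-structure for $h_i$ with characteristic polynomial $\chi_i$.

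\textbf{Main obstacle.} The hardest step is clearly Stage 3: Stages 1--2 yield only an uncontrolled $g$-symmetric commutant of $K_i$ in each block, whereas the statement demands \emph{precisely} $\chi_{3-i}(K_i)$. Isolating this specific polynomial means matching the trace contribution of one block against the conformal weight of the opposite block in \eqref{eqn4}, and it is this delicate trace-swapping mechanism that embeds the characteristic polynomial of one factor as the conformal weight for the other — the genuine BM-theoretic content of the Splitting Lemma, as opposed to the purely algebraic splitting of Stages 1--2.
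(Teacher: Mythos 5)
First, a point of comparison: the paper does not prove Lemma \ref{lemma:splitting} at all --- it is imported verbatim from Bolsinov and Matveev \cite{splitting} and used as a black box, so there is no internal proof to measure your proposal against; what follows is an assessment of the proposal on its own terms against the source it would have to reproduce.

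Your Stages 1--2 are sound and essentially follow the standard argument: the coprime factorization gives the $K$-invariant splitting $TU_x=\ker\chi_1(K)\oplus\ker\chi_2(K)$ via Bezout, $g$-orthogonality of the two summands follows from self-adjointness of $K$ (pair $u\in E_1$, $v\in E_2$ and insert $p_1\chi_1+p_2\chi_2=1$), and your claim that the BM equation forces $N_K\equiv 0$ is correct --- writing $N_K$ with the Levi-Civita connection and substituting $(\nabla_wK)u=\tfrac12(d(\mathrm{tr}K)(u)w+g(u,w)\,\mathrm{grad}\,\mathrm{tr}K)$, all terms cancel using $g$-symmetry of $K$. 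The genuine gap is Stage 3, which you correctly identify as the hard part but do not actually prove. The assertion that ``the only way the $\chi_2$-contribution can be absorbed into a $y$-independent $h_1$ is $L_1=\chi_2(K_1)$'' is circular: the $y$-independence of $h_1$ is part of the conclusion of the lemma, not a hypothesis you may use to pin down $L_1$. The Bolsinov--Matveev proof at this point is a concrete computation (differentiating $\chi_2(K)$ along $E_2$-directions using the BM equation and integrating the resulting ODE for the conformal factor), not a uniqueness-of-normal-form argument. A second unaddressed issue is the final claim that ``restricting \eqref{eqn4} to an integral leaf of $E_i$'' shows $K_i\in B(\text{leaf},h_i)$: the BM equation for $(h_1,K_1)$ involves the Levi-Civita connection of $h_1$, whereas restriction of \eqref{eqn4} only gives you information about the connection of $g$ along the leaf; the induced metric on the leaf is $h_1\chi_2(K_1)$, not $h_1$, and the leaves are not totally geodesic in general, so relating the two connections is a nontrivial step that your sketch skips entirely. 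As it stands, the proposal establishes the block structure of $(g,K)$ but not the two assertions that carry the actual content of the lemma.
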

From now, assume $M$ is connected. Let $U$ be an open subset of $M$. Then $\forall K\in B(M,g)$, we have $K\vert_U\in B(U,g)$. The following restriction map is injective, since $M$ is connected.
$$R_U:B(M,g)\rightarrow B(U,g),\quad K\mapsto K\vert_U.$$
We can view $B(M,g)$ as a linear subspace of $B(U,g)$. Suppose $X$ is a projective vector field for $(M^n,g)$, and denote $\phi^t$ the flow generated by $X$. Further assume that $\exists a>0$ such that $\phi^t(x)$ is defined for $\forall x\in U$, and $\forall t\in I=[-a,a]$. Then the flow $\phi^t$ induces a well defined 1-parameter family of maps $L_t:B(M,g)\rightarrow B(U,g)$ for $t\in I$ as follows. Fix any $x\in U$ and $t\in I$. Suppose $\overline{g}$ is a metric defined on some neighbourhood $V_t$ of $\phi^t(x)$ such that $\overline{g}$ and $g$ are projectively equivalent on $V_t$. Near $x$, we have $(\phi^t)^*\overline{g}$ is a metric projectively equivalent to $g$. Denote $K_t$ the $g$-strength of $(\phi^t)^*g$, so it is well defined on $U$. Then near $x$, the tensor $\rho^{\phi^t}(g,g)(K_{\overline{g}})=\phi^t_*(K_{\overline{g}})\circ K_t$ is a solution to Equation \eqref{eqn4}. For any $y\in M$, we can always choose a neighbourhood $U_y$ of $y$ such that $B(U_y,g)$ has a basis consisting of non-degenerate elements. This implies for each $t\in I$, $\rho^{\phi^t}(g,g)$ defines a linear map $L_t:B(M,g)\rightarrow B(U,g)$ by $L_t(K')=\rho^{\phi^t}(g,g)(K')$.\\
\\
 If we further assume that $D(U,g)=D(M,g)$, every $K'\in B(U,g)$ can be uniquely extended to an element in $B(M,g)$. To simplify the notation, define $B=B(M,g)$. Then one can take $L_t$ as a map $L_t:B\rightarrow B$ for each $t\in I$. A natural question to ask is whether $L_t$ can be extended to a 1-parameter subgroup of $GL(B)$. This leads to the following lemma.
\\
\begin{lemma}
\label{lemma:group}
Let $(M^n,g)$ be connected with a projective vector field $X$. Suppose $X$ vanishes at $o\in M$. Assume that $U$ with $D(U,g)=D(M,g)$ is a connected open set containing $o$ such that $\phi^t$ is defined on $U$ for $t\in I=[-a,a]$ for some $a>0$. Then the map $L_t:B\rightarrow B$ defined in the previous paragraph satisfies the following:
\begin{itemize}
\item
 $L_{t+s}=L_t\circ L_s$ for $t,s,t+s\in I$. 
 \item
 The representation $L_t:I\rightarrow GL(B)$ is continuous in $t$.
\end{itemize}
In other words, $L_t$ can be extended to a 1-parameter subgroup of $GL(B)$.
\end{lemma}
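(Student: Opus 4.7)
The plan is to verify both properties using the multiplicativity of $\rho$ recorded in \eqref{eqn3}, the smooth dependence of $\phi^t$ on $t$, and the injectivity of the restriction map $R_W : B(M,g) \to B(W,g)$ for any connected open $W \subset M$; once these are in hand, a standard argument extends a continuous local one-parameter family of automorphisms of $B$ to a global one-parameter subgroup of $GL(B)$.

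The composition law requires care because $\phi^t(U)$ is not assumed to lie in $U$, so \eqref{eqn3} cannot be applied on $U$ directly. However, $o \in U$ is fixed by the flow, so by continuity of $\phi^t$ and compactness of $I$ there is a connected open neighbourhood $W \subset U$ of $o$ with $\phi^t(W) \subset U$ for every $t \in I$. Applying \eqref{eqn3} on the chain $W \xrightarrow{\phi^s} \phi^s(W) \subset U \xrightarrow{\phi^t} \phi^{t+s}(W)$ with all three metrics equal to $g$ yields
$$\rho^{\phi^{t+s}}(g,g)(K') \;=\; \rho^{\phi^t}(g,g)\bigl(\rho^{\phi^s}(g,g)(K')\bigr) \quad \text{on } W$$
for every $K' \in B$. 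By construction $L_s(K')$ is the unique element of $B(M,g)$ agreeing with $\rho^{\phi^s}(g,g)(K')$ on $U$, so in particular they agree on $\phi^t(W) \subset U$; the right-hand side of the display thus equals $L_t(L_s(K'))|_W$, while the left-hand side is $L_{t+s}(K')|_W$. Since \eqref{eqn4} is of finite type, $R_W$ is injective, and we conclude $L_{t+s} = L_t \circ L_s$ in $\mathrm{End}(B)$.

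For continuity, fix a point $x \in U$ and $K' \in B$. The pulled-back metric $(\phi^t)^*g|_x$ and the tensor $\phi^t_*K'|_x$ depend smoothly on $t$, and $\rho(g)(\bar g)$ is a smooth (in fact rational) function of the components of a non-degenerate $\bar g$, so the pointwise values of $L_t(K')|_U = \phi^t_*(K') \circ K_t$ depend smoothly on $t$. Since $B$ is finite-dimensional, the matrix of $L_t$ in any fixed basis of $B$ is then smooth in $t$, giving continuity of $t \mapsto L_t$ from $I$ into $\mathrm{End}(B)$. Substituting $s = -t$ in the composition law gives $L_t \circ L_{-t} = L_0 = \mathrm{Id}_B$, so each $L_t$ lies in $GL(B)$. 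Finally, setting $A = \tfrac{d}{dt}\big|_{t=0} L_t \in \mathrm{End}(B)$ and differentiating $L_{t+s} = L_t L_s$ in $s$ at $s=0$ yields $\tfrac{d}{dt} L_t = A L_t$ on $I$, whence $L_t = \exp(tA)$ there; this formula defines the required extension to a one-parameter subgroup of $GL(B)$.

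The main obstacle is the composition law — specifically the juggling of the extension map $B(U,g) \to B(M,g)$ when feeding $L_s(K')$ back into $L_t$. This is handled by shrinking to $W \subset U$ so that both $\phi^s$ and $\phi^t$ map into $U$, and by invoking the uniqueness of the extension guaranteed by the hypothesis $D(U,g) = D(M,g)$; the remainder of the argument is routine.
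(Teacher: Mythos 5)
Your proof is correct and follows essentially the same route as the paper: shrink to a neighbourhood of the fixed point $o$ so that the multiplicativity \eqref{eqn3} can be applied, use the finite-type property (injectivity of the restriction map on BM-structures over a connected set) to upgrade the local identity to one in $\mathrm{End}(B)$, and deduce continuity from pointwise smooth dependence on $t$ together with finite-dimensionality of $B$. The only cosmetic difference is the final extension step, where you integrate $L_t=\exp(tA)$ while the paper uses a closed-subgroup argument; both are standard and equivalent here.
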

\begin{proof}
Fix any $K'\in B=B(M,g)$. For any $t\in I$, $L_t(K')$ is the unique element in $B(M,g)$ such that:
$$L_t(K')\vert_U=\phi^t_*(K')\circ K_t\in B(U,g).$$
 Note that given the embedding $\phi^t:U\rightarrow M$, we have on $U$:
$$L_t(K')\vert_U=\rho^{\phi^t}(g,g)(K').$$
The embedding $\phi^s:U\rightarrow M$ gives $$L_s(L_t(K'))\vert_U=\rho^{\phi^s}(g,g)(L_t(K')).$$
Because $X$ vanishes at $o$, there is some neighbourhood $U_o$ of $o$ such that $\phi^s(U_o)\subset U$. Then we get the following sequence of embeddings:
$$U_o\xrightarrow{\phi^s} U\xrightarrow{\phi^t} M.$$
Because $t,s,t+s\in I$, by Equation \eqref{eqn3}, we have on $U_o$:
\begin{align}
\label{eqn5-7}
L_s(L_t(K'))\vert_{U_o}&=\left(\rho^{\phi^s}(g,g)\circ \rho^{\phi^t}(g,g)\right)(K'),\\
&=\rho^{\phi^{t+s}}(g,g)(K'),\\
&=L_{t+s}(K')\vert_{U_o}.
\end{align}
Since $U$ is connected, any BM-structure on $U$ is uniquely determined by its k-th jet at $o$ for some $k\geq0$. Then $L_{t+s}(K')=L_s\circ L_t(K')$ on $U_o$ implies   $L_{t+s}(K')=L_s\circ L_t(K')$ on $M^n$.\\
\\
Next we show the representation $L_t:I\rightarrow GL(B)$ is continuous in $t$. Because $L_t$ is linear for each $t$ and $B$ is a finite dimensional vector space, it is sufficient to show for any fixed $K'\in B$, $L_t(K')$ is continuous in $t$. Fix a compact neighbourhood $V_o\subset U$ of $o$ and a basis $\lbrace K^i\rbrace$ for $B$. Then we can write $L_t(K')=\sum c_i(t)K^i$, where $c_i:I\rightarrow\mathbb{R}$. Since \eqref{eqn4} is of finite type, $\lbrace K^i\rbrace$ are linearly independent over  $V_o$. We have on $U\supset V_o$, $L_t(K')=\phi^t_*(K')\circ K_t$. Then for any fixed $t_0\in I$, as $t\rightarrow t_0$, we have $L_t(K')\rightarrow L_{t_0}(K')$ uniformly on $V_0$. Then for each $i$, as $t\rightarrow t_0$, we have $c_i(t)\rightarrow c_i(t_0)$. This proves the continuity of $L_t:I\rightarrow GL(B)$. This implies $L_t$ can be extended to a continuous map defined on $\mathbb{R}$ with $L_t\circ L_s=L_{t+s}$. Hence the image of $\mathbb{R}^2$ under the map $(L_t,Id)$ is a closed subgroup in $GL(B)\times \mathbb{R}$. It follows that $L_t$ can be extended to a 1-parameter subgroup of $GL(B)$.
\end{proof}
\noindent
The following shows the neighbourhood $U$ in Lemma \ref{lemma:group} always exists.
\begin{lemma}
\label{lemma:exists}
Let $(M^n,g)$ be a connected manifold. Suppose $X$ is a projective vector field for $g$ vanishing at $o\in M$. Then there exists a connected open set $U$ containing $o$ such that $D(U,g)=D(M^n,g)$, and $\exists a>0$ such that $\phi^t$ is well defined on $U$ for $t\in I=[-a,a]$.
\end{lemma}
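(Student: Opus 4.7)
The plan is to exploit that $B(M,g)$ corresponds bijectively to the space of parallel sections of the linear connection on the finite-rank vector bundle $VM$ recalled just before this lemma. Writing $\mathcal{P}_{W}\subset V_{o}M$ for the subspace of values at $o$ of parallel sections of $VM$ over a connected open $W\ni o$, one has $\dim\mathcal{P}_{W}=D(W,g)$, and standard linear-connection theory identifies $\mathcal{P}_{W}$ with the fixed-point set of the holonomy representation $\mathrm{Hol}_{o}(W)\subset GL(V_{o}M)$. Shrinking $W$ discards loops and thus enlarges $\mathcal{P}_{W}$, so the target equality $D(U,g)=D(M,g)$ amounts to $\mathrm{Fix}(\mathrm{Hol}_{o}(U))=\mathrm{Fix}(\mathrm{Hol}_{o}(M))$.

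The first step is to trade $\mathrm{Hol}_{o}(M)$ for a finite sub-collection of loops. Since $V_{o}M$ is finite-dimensional and each $\mathrm{Fix}(\rho)$ is a linear subspace, a descending-dimension argument yields loops $\gamma_{1},\dots,\gamma_{m}$ at $o$ in $M$ (with $m\leq\mathrm{rank}(VM)$) whose parallel transports $\rho_{i}$ satisfy $\bigcap_{i}\mathrm{Fix}(\rho_{i})=\mathcal{P}_{M}$. Set $K=\bigcup_{i}\gamma_{i}([0,1])$; this is compact and, because every $\gamma_{i}$ passes through $o$, connected.

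Next I would handle the flow constraint. Since $X_{o}=0$ the integral curve of $X$ through $o$ exists for all time, and openness of the flow domain in $M\times\mathbb{R}$ gives, for each $x\in K$, a neighbourhood $V_{x}$ and $a_{x}>0$ with $\phi^{t}$ defined on $V_{x}$ for $|t|\leq a_{x}$. Covering the compact set $K$ by finitely many $V_{x_{j}}$, setting $a=\min_{j}a_{x_{j}}$ and $V=\bigcup_{j}V_{x_{j}}$, and then taking $U$ to be the connected component of $V$ containing $K$ produces an open connected $U\ni o$ on which $\phi^{t}$ is defined for $t\in[-a,a]$. Because each $\gamma_{i}\subset K\subset U$, the transports $\rho_{i}$ belong to $\mathrm{Hol}_{o}(U)$, forcing $\mathcal{P}_{U}\subset\bigcap_{i}\mathrm{Fix}(\rho_{i})=\mathcal{P}_{M}$; the trivial inclusion $\mathcal{P}_{M}\subset\mathcal{P}_{U}$ then gives $D(U,g)=D(M,g)$.

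The main difficulty, I expect, is reconciling the two opposing constraints on $U$: it must be \emph{large} enough to realize every loop that obstructs extending a local BM-structure to $M$, yet \emph{small} enough for the flow of $X$ to remain defined uniformly over a positive time interval. Thickening the specifically chosen compact connected set $K$, rather than directly manipulating open sets, is the device that lets both conditions be satisfied at once. A minor point to verify at the outset is the standard bijection between parallel sections over a connected open set and their holonomy-invariant values at a basepoint, which is implicit in the finite-type property quoted from $\cite{metric}$.
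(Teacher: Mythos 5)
Your argument is correct, but it takes a genuinely different route from the paper. The paper works directly with the degree of mobility: it exhausts $M$ by an increasing chain of connected open sets $U_1\subset U_2\subset\cdots$ containing $o$, where $U_i$ is the component of the interior of $\lbrace x: \phi^t(x)\ \text{exists for}\ \vert t\vert\leq 1/i\rbrace$ containing $o$; since the restriction maps $B(U_{i+1},g)\to B(U_i,g)$ are injective and $D(U_1,g)<\infty$, the non-increasing integer sequence $D(U_i,g)\geq D(M,g)$ stabilizes, and at the stabilization stage the compatible extensions glue to global BM-structures, forcing $D(U_{i_0},g)=D(M,g)$. This is more elementary: it needs only unique continuation of BM-structures on connected sets and finiteness of the degree of mobility. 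Your proof instead passes through the parallel-section model on $VM$, identifies $D(W,g)$ with $\dim\mathrm{Fix}(\mathrm{Hol}_o(W))$, extracts finitely many obstructing loops by a descending-dimension argument, and thickens their compact connected union so that the flow is defined for a uniform time. This buys a more explicit $U$ (a neighbourhood of a concrete compact set, with at most $\mathrm{rank}(VM)=(n+1)(n+2)/2$ loops needed) and cleanly decouples the ``large enough'' and ``small enough'' constraints, at the cost of having to justify the holonomy description of $\mathcal{P}_W$ for open subsets --- a standard fact (holonomy-invariance at $o$ makes path-independent parallel transport well defined and smooth), but one you should spell out since the paper only quotes the correspondence over $M$ itself. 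Both proofs are complete; note also that your appeal to completeness of the integral curve through $o$ is unnecessary, as openness of the flow domain near $(x,0)$ already suffices for every $x\in K$.
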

\begin{proof}
Define the following sets:
$$S_i=\lbrace x\in M: \phi^t(x)\ \textit{is well defined for}\ t\in[-\dfrac{1}{i},\dfrac{1}{i}] \rbrace.$$
Without loss of generality, we can assume $o\in Int(S_i)$ for all $i$. Let $U_i$ be the component of $Int(S_i)$ containing $o$. Since each $U_i$ is open and connected, it is also path connected. Given any $x\in U_i$, let $\gamma_x$ be a curve in $U_i$ joining $o$ and $x$. Then clearly $\gamma_x \subset Int(S_{i+1})$. It follows that $U_i\subset U_{i+1}$. Similarly, given any $x\in M$, we can choose a curve $\gamma_x'$ in $M$ joining $o$ and $x$. Then there exists $\epsilon>0$ and a neighbourhood $U_{\epsilon}$ of $\gamma_x'$ such that $\phi^t$ is well defined on $U_{\epsilon}$ for $t\in [-\epsilon,\epsilon]$. It follows that $x\in U_i$ for some $i$, hence $\bigcup_{i=1}^{\infty} U_{i}=M$. We have an increasing sequence of open sets containing $o$:
$$o\in U_1\subset U_2\subset \cdots,\quad \bigcup_{i=1}^{\infty}U_i=M.$$
Because each $U_i$ is connected, the restriction map gives a sequence of injective linear maps:
$$B(U_1,g)\xleftarrow{r_1} B(U_2,g)\xleftarrow{r_2} \cdots$$
We have $D(U_i,g)\geq D(M,g)$, and $D(U_1,g)< \infty$. It follows that there exists some $i_0$ such that $r_j:B(U_{j+1},g)\rightarrow B(U_j,g)$ are linear isomorphisms for all $j\geq i_0$. Then any $\tilde{K}\in B(U_{i_0},g)$ can be uniquely extended to an element in $B(U_j,g)$ for all $j\geq i_0$. Because a BM-structure on a connected manifold is uniquely determined by its finite jet at some point, we have $\tilde{K}$ can be extended to an element in $B(M,g)$. Then we get $D(U_{i_0},g)=D(M,g)$. This completes the proof.
\end{proof}
\noindent
Let $U$ be constructed by the lemma above. The map $L_t$ can be extended to a 1-parameter subgroup of $GL(B)$, also denoted as $L_t$. By the following, this construction is in fact coherent.
\begin{corollary}
\label{cor:2.1.2}
Let $X$ be a projective vector field for $(M,g)$ vanishing at $o$. Suppose $M$ is connected. Let $U$,$I$, and $L_t$ be constructed as above. Given any $t_0\in \mathbb{R}$, there exists some neighbourhood $V_{t_0}$ of $o$ such that $\phi^t$ is well defined for $\vert t\vert\leq \vert t_0\vert$, and $L_{t_0}(K')\vert_{V_{t_0}}=\phi^{t_0}_*(K')\circ K_{t_0}$ on $V_{t_0}$.
\end{corollary}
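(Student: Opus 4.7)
The plan is to reduce from a general $t_0 \in \mathbb{R}$ to $s = t_0/N \in I$ via the group property established in Lemma \ref{lemma:group}, and then iterate the local identity $L_s(\tilde K)|_U = \rho^{\phi^s}(g,g)(\tilde K) = \phi^s_*(\tilde K) \circ K_s$ using the multiplicativity \eqref{eqn3} of $\rho^f$ under composition of embeddings.

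First, I fix $N \geq 1$ with $s := t_0/N \in I$. Since $X_o = 0$, one has $\phi^t(o) = o$ for every $t$, so continuity of the flow lets me construct a decreasing nested chain of open neighborhoods of $o$,
$$W_0 = U \supset W_1 \supset \cdots \supset W_N,$$
all contained in $U$, satisfying $\phi^s(W_k) \subset W_{k-1}$ for $1 \leq k \leq N$. Iterating this inclusion, together with the fact that $\phi^t$ is defined on $U$ for $|t| \leq s$, shows that $\phi^t$ is defined on $W_N$ for every $|t| \leq |t_0|$. I take $V_{t_0} := W_N$.

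The main step will be an induction on $k$ proving that on $W_k$,
$$L_s^k(K')|_{W_k} = \bigl[\rho^{\phi^{ks}}(g,g)(K')\bigr]|_{W_k}.$$
The base $k = 0$ is trivial. For the inductive step, applying the defining identity $L_s(\tilde K)|_U = \rho^{\phi^s}(g,g)(\tilde K)$ with the BM-structure $L_s^{k-1}(K') \in B$ in place of $\tilde K$ yields
$$L_s^k(K')|_{W_k} = \bigl[\rho^{\phi^s}(g,g)(L_s^{k-1}(K'))\bigr]|_{W_k}.$$
Because $\phi^s(W_k) \subset W_{k-1}$, only the values of $L_s^{k-1}(K')$ on $W_{k-1}$ enter the right-hand side, and there the inductive hypothesis replaces $L_s^{k-1}(K')$ by $\rho^{\phi^{(k-1)s}}(g,g)(K')$. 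Combining with \eqref{eqn3} in the form $\rho^{\phi^{ks}}(g,g) = \rho^{\phi^s}(g,g) \circ \rho^{\phi^{(k-1)s}}(g,g)$, corresponding to the decomposition $\phi^{ks} = \phi^{(k-1)s} \circ \phi^s$ with $\phi^s: W_k \to W_{k-1}$, closes the step. Taking $k = N$ together with $L_{t_0} = L_s^N$ from Lemma \ref{lemma:group} then gives, on $V_{t_0} = W_N$,
$$L_{t_0}(K')|_{V_{t_0}} = \rho^{\phi^{t_0}}(g,g)(K')|_{V_{t_0}} = \phi^{t_0}_*(K') \circ K_{t_0}.$$
The case $t_0 < 0$ is symmetric.

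The main obstacle is bookkeeping rather than conceptual depth: the identity for $L_s$ is a priori valid only on $U$, so iterating it $N$ times forces one to track the successive images $\phi^{ks}(V_{t_0})$ and verify they remain in the region where the previous inductive step has been established. The shrinking chain $\{W_k\}$ with $\phi^s(W_k) \subset W_{k-1}$ is designed exactly so that the pointwise formula $\rho^{\phi^s}(g,g)(\tilde K)(x)$, which depends on $\tilde K$ only through its value at $\phi^s(x)$, makes legitimate the substitution of the inductive hypothesis at the previous level; the multiplicativity \eqref{eqn3} then splices the local identities into the global equality required on $V_{t_0}$.
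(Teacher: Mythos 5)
Your proposal is correct and follows essentially the same route as the paper: split $t_0$ into $N$ pieces in $I$, build a nested chain of neighbourhoods of $o$ each flowing into the previous one, and induct using $L_{t_0}=L_s^N$ together with the local identity $L_s(\tilde K)\vert_U=\rho^{\phi^s}(g,g)(\tilde K)$ and the multiplicativity \eqref{eqn3}. The only cosmetic difference is that the paper writes the inductive identity as $\phi^{mt_1}_*(K')\circ K_{mt_1}$ rather than via $\rho^{\phi^{ks}}(g,g)$, and requires $\phi^{t'}(W_k)\subset W_{k-1}$ for all $t'\in I$ so that both signs of $t$ are covered at once.
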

\begin{proof}
Without loss of generality, assume $t_0>0$. Let $U$, $I$ be the same as in Lemma \ref{lemma:group}, and $t_0=nt_1$ with $t_1\in I$. Given any $K'\in B\simeq B(U,g)$ and $t\in I$, there is some neighbourhood  $V_t$ of $o$ such that $\phi^t(V_t)\subset U$. In particular, we have $L_{t_1}(K')\vert_{V_{t_1}}=\phi^{t_1}_*(K')\circ K_{t_1}$. Assume there is some neighbourhood $V_{mt_1}\subset U$ of $o$ such that $\phi^s(V_{mt_1})$ is defined for $s\in [-mt_1,mt_1]$ such that
 $$L_{mt_1}(K')\vert_{V_{mt_1}}=\phi^{mt_1}_*(K')\circ K_{mt_1}.$$
We can choose some $V_{(m+1)t_1}$ such that $$o\in V_{(m+1)t_1}\subset V_{mt_1}\subset U,\quad \phi^{t'}(V_{(m+1)t_1})\subset V_{mt_1}\ for \ t'\in I.$$
Then $\phi^s$ is well defined on $V_{(m+1)t_1}$ for $s\in [-(m+1)t_1,(m+1)t_1]$. This implies on $V_{(m+1)t_1}$:
\begin{align}
\label{eqn8-11}
L_{(m+1)t_1}(K')\vert{V_{(m+1)t_1}}&=L_{t_1}(L_{mt_1}(K'))\vert{V_{(m+1)t_1}},\\
&=\phi^{t_1}_*(L_{mt_1}(K'))\circ K_{t_1},\\
&=\phi^{t_1}_*(\phi^{mt_1}_*(K')\circ K_{mt_1})\circ K_{t_1},\\
&=\phi^{(m+1)t_1}_*(K')\circ K_{(m+1)t_1}.
\end{align}
By induction, we have on $V_{t_0}=V_{nt_1}$, $L_{t_0}(K')\vert_{V_{t_0}}=\phi^{t_0}_*(K')\circ K_{t_0}$.
\end{proof}
\section{Local results and general theory when $D(M,g)$ is $2$}
\label{sec:localresult}
Let $(M^n,g)$ be a connected manifold with $D(M,g)=2$. Let $X$ be a projective vector field for $g$ with a singularity $o$. Denote $\phi^t$ the flow generated by $X$. Suppose $X$ is not linearizable at $o$. Then $L_t$ is a 1-parameter subgroup of $GL(B)\simeq GL_2(\mathbb{R})$. By Corollary \ref{cor:2.1.2}, for any fixed $t\in \mathbb{R}$, on some neighbourhood $V_t$ of $o$, we have
 $$L_t(K')= \phi^t_*(K')\circ K_t.$$ 
In particular on $V_t$, we have $L_t(Id)=K_t$. By Corollary \ref{cor:2.1.1}, for any $t\neq 0$, the metrics $g_t$ and $g$ are not affine equivalent on any neighbourhood of $o$. This implies the eigenfunctions of $K_t$ are not all constant on any neighbourhood of $o$. Otherwise by Equation \eqref{eqn4}, we get $\nabla K_t=0$ near $o$, then $g_t$ and $g$ are affine equivalent near $o$. The group $L_t$ is elliptic if and only if its action on $\mathbb{P}(B)$ is periodic. Suppose $L_t$ is elliptic, then $\exists t_0\neq 0$ such that $K_{t_0}=L_{t_0}(Id)=rId$ with $r\neq 0$. Thus $L_t$ cannot be an elliptic 1-parameter subgroup of $GL(B)$. We can prove $L_t$ is in fact parabolic:
\begin{theorem}
\label{thm:parabolic}
Let $(M^n,g)$ be a connected semi-Riemannian manifold with $D(M,g)=2$. Let $X$ be a projective vector field for $g$ vanishing at $o$. Suppose $X$ is not linearizable at $o\in M$, then $L_t$ is a 1-parameter parabolic subgroup of $GL(B)$.
\end{theorem}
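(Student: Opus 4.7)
The plan is to classify one-parameter subgroups of $GL(B)\cong GL_2(\mathbb{R})$ into scalar, elliptic, hyperbolic, and parabolic types and rule out all but the parabolic one. The elliptic case is excluded in the paragraph just above the theorem, and the scalar case $L_t=e^{t\mu}\mathrm{Id}_B$ is excluded by the same mechanism: one would have $K_t=e^{t\mu}\mathrm{Id}$ on $U$ with constant eigenfunctions, contradicting Corollary \ref{cor:2.1.1}. The entire work lies in ruling out the hyperbolic case.

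Suppose for contradiction that $L_t$ is hyperbolic with eigenvectors $K^1,K^2\in B$ and distinct real eigenvalues $\lambda_1\ne\lambda_2$. Since $\mathrm{Id}$ cannot itself be an eigenvector (else $K_t=e^{t\lambda_i}\mathrm{Id}$), we have $\mathrm{Id}=a_1 K^1+a_2 K^2$ with $a_1 a_2\ne 0$ and $K_t=a_1 e^{t\lambda_1}K^1+a_2 e^{t\lambda_2}K^2$ near $o$. Evaluating $\phi^t_*(K^i)\circ K_t=e^{t\lambda_i}K^i$ at the fixed point $o$ gives $A_t K^i(o) A_t^{-1} K_t(o)=e^{t\lambda_i}K^i(o)$ with $A_t=D\phi^t|_o=e^{tA}$; taking determinants yields $\det K_t(o)=e^{tn\lambda_i}$ whenever $K^i(o)$ is non-degenerate, so both non-degenerate would force $\lambda_1=\lambda_2$. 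Hence, up to relabeling, $K^2(o)$ is degenerate and $\det K_t(o)=e^{tn\lambda_1}$; expanding $\det K_t(o)$ as a polynomial of degree $n$ in $e^{t\lambda_1},e^{t\lambda_2}$ and matching the single monomial $e^{tn\lambda_1}$ forces $\det(K^1(o)+s K^2(o))$ to be constant in $s$, so $K^1(o)^{-1} K^2(o)$ is nilpotent. Combined with $\mathrm{Id}=a_1 K^1(o)+a_2 K^2(o)$, every eigenvalue of $K^1(o)^{-1}$ equals $a_1$. In the Riemannian case, the spectral theorem for $g_o$-self-adjoint operators forces $K^1(o)=\mathrm{Id}/a_1$ and hence $K^2(o)=0$, so $K_t(o)=e^{t\lambda_1}\mathrm{Id}$; the change-of-metric identity $g_o(A_t u,A_t v)=e^{-t\lambda_1(n+1)}g_o(u,v)$, applied on the non-trivial $\mathrm{Ker}(A)\subset T_o M$ (non-trivial by Lemma \ref{lemma:linear}) where $A_t$ acts as the identity, yields $\lambda_1=0$.

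With $\lambda_1=0$ the identity above says $A_t$ is a $g_o$-isometry for all $t$, so $A=DX|_o$ is $g_o$-skew-adjoint. Differentiating $\mathrm{tr}(L_t K^2)=e^{t\lambda_2}\mathrm{tr}(K^2)$ in the spatial direction at $o$, using $K^2(o)=0$ and the conjugation-invariance $\mathrm{tr}(\phi^t_* K^2)=\mathrm{tr}(K^2)\circ\phi^{-t}$, produces $\alpha_2(A_t^{-1}v)=e^{t\lambda_2}\alpha_2(v)$ for $\alpha_2:=d(\mathrm{tr}(K^2))|_o$, and differentiating at $t=0$ gives $A^T\alpha_2=-\lambda_2\alpha_2$. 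Via metric duality and the skew-adjointness of $A$, $\alpha_2^\sharp$ is then a real eigenvector of $A$ with real eigenvalue $\lambda_2$; since the only real eigenvalue of a skew-adjoint operator is $0$, either $\lambda_2=0$ (contradicting $\lambda_1\ne\lambda_2$) or $\alpha_2=0$. In the latter case, combined with $a_1\alpha_1+a_2\alpha_2=0$ from differentiating the global identity $n=a_1\mathrm{tr}(K^1)+a_2\mathrm{tr}(K^2)$, we get $\alpha_1=0$ too, and the finite-type nature of Equation \eqref{eqn4} together with the vanishing jet data $K^2(o)=0$ and $\nabla K^2|_o=0$ (from $\alpha_2=0$ via the BM equation) forces $K^2\equiv 0$, contradicting linear independence of $K^1,K^2$. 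The main obstacle is extending to the indefinite semi-Riemannian case, where $g_o$-self-adjointness of $K^1(o)$ no longer implies diagonalizability and $g_o$ may be degenerate on $\mathrm{Ker}(A)$; my plan is to reduce via the Splitting Lemma (Lemma \ref{lemma:splitting}) applied to $(g,K^i)$ to blocks on which analogous arguments run, combined with the explicit form of $X$ in projective normal coordinates from Lemma \ref{lemma:linear}.
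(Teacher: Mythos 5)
Your route---working purely with jets and linear algebra at the fixed point $o$, instead of the paper's dynamical argument along a distinguished geodesic---is genuinely different from the paper's, but as written it has a decisive case gap. After correctly showing that $K^1(o)$ and $K^2(o)$ cannot both be non-degenerate, you silently assume that exactly one of them is degenerate: every subsequent step (matching the single monomial in $\det K_t(o)$, nilpotency of $K^1(o)^{-1}K^2(o)$, and the conclusion $K^2(o)=0$) requires invertibility of $K^1(o)$. The case in which \emph{both} $K^1(o)$ and $K^2(o)$ are degenerate is never treated, and by Lemma \ref{lemma:eigenvalue} this is the only case that can actually occur: the fixed lines of $L_t$ in $\mathbb{P}(B)$ are exactly the elements of $D_o=\lbrace [\overline{K}-r\,Id]:r\in \mathrm{Spec}(\overline{K}_o)\cap\mathbb{R}\rbrace$, and every element of $D_o$ is degenerate at $o$. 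A hyperbolic $L_t$ has two fixed lines, so both of your eigenvectors satisfy $\det K^i(o)=0$; moreover neither vanishes identically at $o$ (if $L_t$ were hyperbolic, $\overline{K}_o$ would have two distinct real eigenvalues $r_1,r_2$, and $\overline{K}_o-r_i\,Id\neq 0$). So your main computation analyzes a configuration that cannot arise, while the configuration that does arise is untouched.

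There are two further gaps. First, the concluding step of your Riemannian branch---``$K^2(o)=0$ and $\nabla K^2\vert_o=0$, hence $K^2\equiv 0$ by finite-type''---is unjustified: the prolongation of Equation \eqref{eqn4} closes on $\bigodot^2TM\oplus TM\oplus C^{\infty}(M)$ (Theorem 3.1 of \cite{metric}), so a BM-structure is determined by its value, a first-order datum, \emph{and} an additional scalar of second order; vanishing of the 1-jet at one point does not kill the solution (on constant-curvature spaces there exist nonzero BM-structures with vanishing 1-jet at a point). You would need to show the second-order component also vanishes, and that is not routine here because $L_t$ is not plain pushforward---it involves composition with $K_t$. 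Second, the theorem is stated for arbitrary signature, and your indefinite case is explicitly a plan rather than a proof; it is precisely where your tools break down, since for indefinite $g_o$ self-adjoint operators need not be diagonalizable and $g_o$ may vanish on $\mathrm{Ker}(A)$, killing both your spectral-theorem step and your deduction $\lambda_1=0$. By contrast, the paper's proof is signature-independent: it chooses $v\in\mathrm{Ker}A$ with $\langle w,v\rangle\neq 0$, notes that $\phi^{t_0}$ reparametrizes the geodesic tangent to $v$ by a nontrivial Mobius map, uses the Splitting Lemma plus the attracting dynamics $\phi^{mt_0}(\gamma(s))\rightarrow o$ to force $\det K_{t_0}$ to be constant along that geodesic, and then contradicts this via Matveev's formula relating $\det K_{t_0}$ to the second derivative of the reparametrization, whose 2-jet is pinned down by Theorem \ref{thm:kobayashi}. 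If you want to salvage your approach, the place to start is the genuinely occurring case $\det K^1(o)=\det K^2(o)=0$, for which your determinant bookkeeping gives no information.
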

\noindent
The idea of the proof of Theorem $\ref{thm:parabolic}$ follows from $\cite{zig}$ by Zeghib. Before proving the theorem, we make the following observation. Let $U,I,L_t$ be as before. Fix any $t_0\neq 0$, we have $\lbrace L_{t_0}(Id), Id\rbrace$ is a basis for $B$. Write $\overline{K}$ for $L_{t_0}(Id)$ for simplicity. Analogous to Section 4.2.1 of $\cite{zig}$, we can write
\begin{align}
\label{eqn12}
L_{t_0}(\overline{K})=\alpha\overline{K}+\beta Id,\quad L_{t_0}(Id)=\overline{K}.
\end{align}
As in Section 4.2 of $\cite{zig}$, define the associated Mobius map
 $$T:\widehat{\mathbb{C}}\rightarrow\widehat{\mathbb{C}},\quad T(z)=\dfrac{\alpha z+\beta}{z}.$$
Now further assume $t_0\in I$, then we have $\overline{K}\vert_U=K_{t_0}$. Thus for $x\in U$, we have
\begin{align}
\label{eqn13}
(\alpha\overline{K}+\beta Id)_x=\left(L_{t_0}(\overline{K})\right)_x=(\phi^{t_0}_*(\overline{K})\circ K_{t_0})_x=(\phi^{t_0}_*(\overline{K}))_x\circ \overline{K}_x.
\end{align}
For $x\in U$, we have $\det(\overline{K}_x)=\det((K_{t_0})_x)\neq 0$. This give the following: $$(\phi^{t_0}_*(\overline{K}))_x=(D\phi^{t_0}_x)^{-1} \overline{K}_{\phi^{t_0}(x)} D\phi^{t_0}_x=(\alpha Id+\beta \overline{K}^{-1})_x.$$ 
Note the right hand side is $(T(\overline{K}))_x$. It follows that $\overline{K}_{\phi^{t_0}(x)}$ and $(T(\overline{K}))_x$ have the same Jordan form. For $x\in U$, we get
\begin{align}
\label{eqn14}
T(\mathrm{Spec}(\overline{K}_x))=\mathrm{Spec}\left(\overline{K}_{\phi^{t_0}(x)}\right).
\end{align}
To prove Theorem $\ref{thm:parabolic}$, we also need the following lemma.
\begin{lemma}
\label{lemma:eigenvalue}
Suppose $L_t$ is induced by a projective vector field admitting a non-linearizable vanishing point $o\in M$. Fix any $t_0\neq 0$, and define $\overline{K}$ and $T$ as before. Then $L_t$ defines a non-trivial 1-parameter parabolic or hyperbolic subgroup of $PGL(B)$ acting on $\mathbb{P}(B)$. Its fixed set on $\mathbb{P}(B)$ is exactly the following:
$$D_o=\lbrace [\overline{K}-rId]:r\in \mathrm{Spec}((\overline{K})_o)\cap \mathbb{R}\rbrace$$
Moreover, the fixed set of the Mobius map $T$ on $\widehat{\mathbb{C}}$ is exactly $\mathrm{Spec}(\overline{K}_o)$.
\end{lemma}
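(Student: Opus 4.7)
The strategy breaks into three stages: (i) classify $L_t$ as a parabolic or hyperbolic 1-parameter subgroup of $PGL(B)\cong PGL_2(\mathbb{R})$; (ii) identify its fixed set on $\mathbb{P}(B)$ with the real fixed points of the Möbius map $T$ on $\widehat{\mathbb{C}}$; and (iii) identify the full fixed set of $T$ with $\mathrm{Spec}(\overline{K}_o)$.

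For (i), the paragraph just before Theorem \ref{thm:parabolic} already rules out that $L_t$ is elliptic: such a subgroup would yield some $t_0\neq 0$ with $K_{t_0}=rId$ near $o$, contradicting Corollary \ref{cor:2.1.1}. The same argument excludes $L_t$ being trivial in $PGL(B)$, since $L_{t_0}(Id)=rId$ would again force $K_{t_0}=rId$ on $U$. Hence $L_t$ is parabolic or hyperbolic, and $T=L_{t_0}$ inherits this type. For (ii), Equation \eqref{eqn12} shows $L_{t_0}$ acts on $B$ in the basis $\{\overline{K},Id\}$ by the matrix $\bigl(\begin{smallmatrix}\alpha&1\\\beta&0\end{smallmatrix}\bigr)$. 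A direct eigenvector computation shows that $[\overline{K}-rId]\in\mathbb{P}(B)$ is fixed by $L_{t_0}$ precisely when $r^2-\alpha r-\beta=0$, which is exactly the equation $T(r)=r$. Thus the fixed set of $L_t$ on $\mathbb{P}(B)$ equals $\{[\overline{K}-rId]:r\in\mathrm{Fix}(T)\cap\mathbb{R}\}$, and $[Id]$ is never fixed since that would force $\overline{K}=cId$.

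For (iii), Equation \eqref{eqn14} at the flow-invariant point $o$ shows $T$ permutes the finite set $\mathrm{Spec}(\overline{K}_o)$. Non-ellipticity of $L_t$ forces $T$ to have infinite order in $PGL_2(\mathbb{C})$, so its only finite orbits are fixed points; this gives $\mathrm{Spec}(\overline{K}_o)\subseteq\mathrm{Fix}(T)$. For the reverse inclusion I invoke non-linearizability: by Lemma \ref{lemma:linear} and Remark \ref{rmk1}, one may arrange coordinates in which $X_x=Ax+\langle w,x\rangle x$ with $0\neq w\in\mathrm{Ker}(A)$, and along the ray $sw$ the flow is $\phi^t(sw)=sw/(1+ast)$ with $a>0$, converging to $o$ forward in time for $s>0$ and backward in time for $s<0$. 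Iterating Equation \eqref{eqn14} along these two orbits and passing to the limit using continuity of the spectrum, $T^{n}$ (respectively $T^{-n}$) pushes any eigenvalue of $\overline{K}_{sw}$ distinct from the repelling (respectively attracting) fixed point of $T$ to the attracting (respectively repelling) one, which is therefore in $\mathrm{Spec}(\overline{K}_o)$.

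The main obstacle is finishing (iii) in the hyperbolic case, where one must recover \emph{both} fixed points of $T$ as eigenvalues of $\overline{K}_o$. The delicate configuration is when $\mathrm{Spec}(\overline{K}_{sw})$ consists of a single fixed point of $T$ along an entire ray; continuity of the spectrum then pins $\mathrm{Spec}(\overline{K}_o)$ to that same value, and $\overline{K}-rId$ vanishes (or becomes nilpotent) along the ray. Invoking the finite-type rigidity of solutions of \eqref{eqn4} together with the linear independence of $\overline{K}$ and $Id$ in $B$ produces the required contradiction and closes the identification $\mathrm{Spec}(\overline{K}_o)=\mathrm{Fix}(T)$.
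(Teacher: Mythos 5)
Your stages (i) and (ii) are sound, and the inclusion $\mathrm{Spec}(\overline{K}_o)\subseteq\mathrm{Fix}(T)$ via the finite-invariant-set argument is exactly the paper's own reasoning (the paper reaches the fixed-set description through degeneracy at $o$ and finiteness of $D_o$, but your eigenvector computation is an equivalent route). The genuine gap is the reverse inclusion $\mathrm{Fix}(T)\subseteq\mathrm{Spec}(\overline{K}_o)$ in the hyperbolic case, i.e.\ precisely the ``delicate configuration'' you flag: writing $\lambda_u,\lambda_b$ for the repelling and attracting fixed points of $T$, it can a priori happen that $\mathrm{Spec}(\overline{K}_{sw})=\lbrace\lambda_u\rbrace$ along the whole forward-invariant ray, whence $\mathrm{Spec}(\overline{K}_o)=\lbrace\lambda_u\rbrace$ while $\lambda_b\in\mathrm{Fix}(T)\setminus\mathrm{Spec}(\overline{K}_o)$. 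This is not a removable technicality: it is exactly the configuration the lemma must exclude (both the fixed-set claim and the M\"obius-map claim fail in it), and your proposed patch does not exclude it. Nilpotency, or even vanishing, of $\overline{K}-\lambda_u Id$ along the ray is perfectly compatible with linear independence of $\overline{K}$ and $Id$ in $B$, since a global section of a tensor bundle can be degenerate or nilpotent on a one-dimensional set and not elsewhere; and the finite-type rigidity of Equation \eqref{eqn4} determines a solution from its full $k$-jet \emph{at a point}, which is not controlled by its values along a curve (all transverse derivatives are unconstrained). In the semi-Riemannian setting you cannot even deduce $\overline{K}=\lambda_u Id$ on the ray from $\mathrm{Spec}(\overline{K}_{sw})=\lbrace\lambda_u\rbrace$, since $g$-self-adjoint operators need not be diagonalizable.

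What is missing is the paper's key mechanism, which disposes of \emph{every} candidate fixed line outside $D_o$ (and hence of your delicate configuration) in one stroke: if $[K^1]=[\overline{K}-r_0Id]$ is fixed by $L_t$ with $r_0\notin\mathrm{Spec}(\overline{K}_o)$, then $K^1$ is non-degenerate near $o$, hence is the $g$-strength of a metric $g_{K^1}$ projectively equivalent to $g$ on a neighbourhood of $o$; the relation $L_t(K^1)=e^{ct}K^1$ together with Corollary \ref{cor:2.1.2} shows $(\phi^t)^*g_{K^1}$ is a constant multiple of $g_{K^1}$, so $X$ is a homothetic --- in particular affine --- vector field for the Levi-Civita connection of $g_{K^1}$, which lies in $[\nabla]$ locally; Proposition \ref{essential} then makes $X$ linearizable at $o$, a contradiction (the same argument excludes the line $[Id]$). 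In your delicate configuration, $[\overline{K}-\lambda_b Id]$ is such a fixed line by your own stage (ii), so this argument is exactly what closes the gap; note that this is where non-linearizability enters in its essential form (homothetic $\Rightarrow$ affine $\Rightarrow$ linearizable), rather than only through the ray dynamics. Your dynamical argument does work in the parabolic case and in the generic hyperbolic case, but without the homothety argument, or a genuine substitute for it, the proof is incomplete.
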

\begin{proof}
As we have already noted in the first paragraph of this section $L_t$ is either hyperbolic or parabolic. Then for any $t_0\neq 0$, the fixed set of $L_{t_0}$ on $\mathbb{P}(B)$ is the fixed set of $L_t$ on $\mathbb{P}(B)$. It is clearly non-empty. For any fixed $t_0\neq 0$, by Corollary \ref{cor:2.1.2}, there is a neighbourhood $V$ of $o$ such that $$L_{t_0}(K')\vert_V=\phi^{t_0}_*(K')\circ K_{t_0},\ \forall K'\in B.$$ Then $(L_{t_0}(K'))_o$ is degenerate if and only if $(K')_o$ is degenerate. For $K\in B$, we have $K\in D_o$ if and only if $K$ is degenerate at $o$. This implies $L_{t_0}$ takes $D_o\subset \mathbb{P}(B)$ to itself. Because $D_o$ is a finite discrete subset of $\mathbb{P}(B)$, we have $L_t$ fixes all elements in $D_o$.\\
\\
Suppose that there is some $[\overline{K}-r_0Id]\notin D_o$ fixed by $L_t$, and we seek a contradiction. Let $K^1=\overline{K}-r_0Id$, then $L_t(K^1)=e^{ct}K^1$, for some $c\in \mathbb{R}$. Note $K^1$ is non-degenerate near $o$. Then $K^1$ defines a metric $g_{K^1}$ projectively equivalent to $g$ on some neighbourhood $V_o\subset U$ of $o$. Because $L_t(K^1)\vert_U=\phi^t_*(K^1)\circ K_t$ for $t\in I$, we have $X$ is a homothetic vector field for $g_{K^1}$. This is impossible. Also note that $L_t$ does not fix the line $[Id]$, otherwise $X$ is a homothetic vector field for $g$. This proves the fixed set of $L_t$ on $\mathbb{P}(B)$ is exactly $D_o$\\
\\
For any fixed $t_0\neq 0$, the associated Mobius map is of the form $T(z)=\dfrac{\alpha z+\beta}{z}$. Under the basis $\lbrace \overline{K},Id\rbrace$, $L_{t_0}$ has the following matrix representation:
$$\begin{bmatrix}
\alpha &&1\\
\beta && 0
\end{bmatrix}.$$
Denote $F(T)$ the fixed set of $T$ on $\widehat{\mathbb{C}}$. Then $L_{t_0}$ fixes exactly $D_o$ implies $F(T)\cap\mathbb{R}$ is exactly $\mathrm{Spec}(\overline{K}_o)\cap \mathbb{R}$. Because $L_{t_0}$ is non-elliptic, it fixes some line $[\overline{K}-r_0Id]\in D_o$. It follows that $\beta=-r_0(\alpha-r_0)$ with $r_0\in\mathbb{R}$. Then the equation $z^2=\alpha z+\beta$ has 1 or 2 distinct real root. In either case, we have $F(T)$ is a subset of $\mathbb{R}$, so $F(T)=\mathrm{Spec}(\overline{K}_o)\cap \mathbb{R}$. In addition, the finite subsets of $\widehat{\mathbb{C}}$ preserved by $T$ are subsets of $F(T)$. According to Equation \eqref{eqn14}, we have $\mathrm{Spec}((\overline{K})_o)$ is a finite set preserved by $T$. It follows that $F(T)=\mathrm{Spec}((\overline{K})_o)$. This completes the proof.
\end{proof}
\noindent
Now we can prove Theorem $\ref{thm:parabolic}$.
\begin{proof}[Proof of Theorem $\ref{thm:parabolic}$]
The general scheme of this proof is as follows. First, we use the normal form of $X$ given in Lemma $\ref{lemma:linear}$ to obtain the dynamics of $\phi^t$ on some special geodesic curve $\gamma$. Assume $L_t$ is hyperbolic and fix some $t_0\neq 0$. The Splitting Lemma allows us to write $(g,K_{t_0})$ in block diagonal forms. Using this and the hyperbolicity of $T$, we study the behaviour of the eigenfunctions of $K_{t_0}$ along $\gamma$. The dynamics of $\phi^{t_0}$ along $\gamma$ and the dynamics of the associated Mobius map $T$ are related by eigenfunctions of $K_{t_0}$ as in Equation $(\ref{eqn14})$. We use this to derive a contradiction.\\
\\
By Lemma \ref{lemma:eigenvalue}, $L_t$ is either hyperbolic or parabolic. Suppose $L_t$ is hyperbolic. Choose $0\neq t_0\in I$, then $K_{t_0}$ is the $g$-strength of $g_{t_0}$ on $U$. Denote $\nabla$ the Levi-Civita connection for $g$. Let $P=P(\nabla)$ be the projective Cartan bundle for $\nabla$.  Then $\nabla$ induces a $GL_n$ sub-bundle $\Gamma$ of $P$. Choose $p\in \Gamma \cap \pi^{-1}(o)$. The section given by $\exp_p(\mathfrak{g_{-1}})$ locally defines a torsion-free affine connection $\overline{\nabla}\in[\nabla\vert_V]$ on some neighbourhood $V$ of $o$. Let $\sigma_p$ be a projective normal coordinate of $P$ with respect to $p$. Clearly by Theorem $\ref{thm:kobayashi}$, $\sigma_p$ is a normal coordinate of $\overline{\nabla}$ at $o$. Because $X$ is not linearizable at $o$, by Lemma \ref{lemma:linear}, $(\sigma_p)^{-1}_*X$ has the following form:
$$X_x=Ax+\langle w,x\rangle x,\quad w\notin Im(A^T).$$
Choose $v\in \mathrm{Ker}A$ such that $\langle w,v\rangle\neq 0$. In the local coordinate $\sigma_p$, there exists $a\neq0$ and $\epsilon>0$ such that
\begin{align}
\label{eqn15}
\phi^{t}(yv)=\left(\dfrac{y}{1+tay}\right)v,\ y\in(-\epsilon,\epsilon),\ t\in I.
\end{align}
Let $\gamma(s)$ and $\gamma(s(y))$ be geodesics with initial vector $(\sigma_p)_*v$ for $\nabla$ and $\overline{\nabla}$, respectively. Denote $E:T_oM\rightarrow M$ and $\overline{E}:T_oM\rightarrow M$ the exponential maps for $\nabla$ and $\overline{\nabla}$ at $o$, respectively. From Theorem $\ref{thm:kobayashi}$ by Nagano and Kobayashi, we have $J^2(E)(0)=J^2(\overline{E})(0)$, because $p\in\Gamma\cap \pi^{-1}(o)$. Then we obtain
\begin{align}
\label{eqn16}
\dfrac{ds}{dy}(0)=1,\quad \dfrac{d^2s}{dy^2}(0)=0.
\end{align}
Note that $\phi^t$ preserves the unparametrized geodesic given by $\gamma$. Then for small $s$, define a parametrized family of functions $\tau_t$ with $\tau_t(0)=0$ for $t\in I$ by the following:
$$\phi^{t}\circ \gamma(s)=\gamma(\tau_t(s)).$$
Let $\tau=\tau_{t_0}$ for simplicity. From Equation \eqref{eqn15}, we also have $\dfrac{d\tau}{ds}(0)=1$. As in Equation (5) of $\cite{matpse}$, define the function:
$$\psi(s)=-\dfrac{1}{2}\log(\det(K_{t_0}))(\gamma(s)).$$
Then for small $s$, we have by Equation (2) and (3) of $\cite{matpse}$:
$$\dfrac{d\psi}{ds}=\dfrac{1}{2} \dfrac{d}{ds}(\log(\dfrac{d\tau}{ds})).$$
It follows that $\dfrac{d\psi}{ds}(0)=\dfrac{1}{2}\dfrac{d^2\tau}{ds^2}(0)$. According to Lemma \ref{lemma:eigenvalue}, $\mathrm{Spec}((K_{t_0})_o)=\lbrace \lambda_u,\lambda_b\rbrace \subset \mathbb{R}$. Here $\lambda_u,\lambda_b$ are the unstable and stable fixed point of the associated Mobius map $T(z)=\dfrac{\alpha z+\beta}{z}$, respectively.
We can apply the Splitting Lemma (Lemma $\ref{lemma:splitting}$). On some neighbourhood $V'\subset V$ of $o$, there is a smooth local coordinate in which $K_{t_0}$ can be written in the following block-diagonal form:
$$K_{t_0}=\begin{bmatrix}
K_u && 0\\
0 && K_b
\end{bmatrix},\quad \mathrm{Spec}((K_u)_o)=\lbrace \lambda_u \rbrace,\ \mathrm{Spec}((K_b)_o)=\lbrace \lambda_b \rbrace.$$
 We may choose $V'$ small enough so that $\mathrm{Spec}(K_u)\vert_{V'}\subset D_u$, and $\mathrm{Spec}(K_b)\vert_{V'}\subset D_b$. Here $D_u, D_b$ are 2 disjoint disks in $\mathbb{C}$ centered at $\lambda_u,\lambda_b$, respectively. It follows that
\begin{align}
\label{eqn17}
\psi(s)=-\dfrac{1}{2}\left( \log(det(K_u))(\gamma(s))+\log(det(K_b))(\gamma(s))\right).
\end{align}
Define $f_u(s)=\det(K_u)(\gamma(s))$, and $f_b(s)=\det(K_b)(\gamma(s))$. Without loss of generality, let us assume $t_0a>0$. From Equation \eqref{eqn15}, for small $s>0$, we have $\tau(s)<s$, and $\phi^{mt_0}(\gamma(s))\rightarrow o $ as $m\rightarrow +\infty$. If we choose the eigenfunctions of $K_u$ and $K_b$ to be continuous on $V'$, then it can be shown that the eigenfunctions of $K_u$ have to be constant on $\gamma(s)$ for small $s>0$. Suppose this is not the case. Let $\tilde{k_u}$ be an eigenfunction of $K_u$, and write $k_u(s)=\tilde{k_u}(\gamma(s))$. Then there is some $s_0>0$ such that $\gamma([0,s_0])\subset V',\ k_u(s_0)\neq \lambda_u$. 
$$\gamma([0,s_0])\subset V'\subset V\Longrightarrow \phi^{t_0}\circ \gamma([0,s_0])\subset \gamma([0,s_0]).$$
 The map $T$ is continuous on $\widehat{\mathbb{C}}$. Therefore, $T^m\circ k_u:[0,s_0]\rightarrow \widehat{\mathbb{C}}$ is a continuous map for each $m$. For large $m$, we have $T^m(k_u(s_0))\in D_b$. On the other hand, for any $s'\in [0,s_0]$ we have $$T^m(k_u(s'))\in Spec\left((K_{t_0})(\phi^{mt_0}\circ \gamma(s'))\right)\subset D_u \cup D_b.$$
 Because $T^m(k_u(0))=\lambda_u$ for all $m$, we have $T^m\circ k_u([0,s_0])$ is not connected for large $m$. This contradicts the continuity.\\
\\
The above implies $f_u(s)$ is constant for small $s\geq 0$. Similarly, we can prove $f_b(s)$ is constant for small $s\leq 0$. From Equation \eqref{eqn17}, we have $\dfrac{d\psi}{ds}(0)=0$. It follows that
$$\dfrac{d^2\tau}{ds^2}(0)=0.$$
Define the Mobius map $\widehat{T}(y)=\dfrac{y}{1+t_0ay}$. From Equation \eqref{eqn15}, we have near $0$:
$$\tau\circ s(y)=s\circ \widehat{T}(y).$$
By Equation \eqref{eqn16}, we have $J^2(\tau)(0)=J^2(\widehat{T})(0)$. This gives $\dfrac{d^2}{dy^2}(\widehat{T})(0)=0$, which is clearly impossible because $t_0a\neq 0$. We obtain a contradiction. Hence $L_t$ can only be a 1-parameter parabolic subgroup of $GL(B)$.
\end{proof}
\section{Global results when $(M^n,g)$ is closed or Riemannian}
\label{sec:globalresult}
\subsection{Result for the case $g$ is Riemannian, proof of Theorem \ref{thm:rem}}
\label{sec:rem}
In this section, we give the proof of Theorem $\ref{thm:rem}$ stated in the introduction.\\
\\
Before we prove the theorem, we make the following observations. Let $(M^n,g)$ with $n\geq 3$ be a connected Riemannian manifold with $D(M^n,g)=2$. Then $\forall K'\in B(M,g)$, $K'$ is real diagonalizable, because it is a self-adjoint operator for the Riemannian metric $g$. Let $U,I,L_t$ be as before. We know from Theorem $\ref{thm:parabolic}$ that $L_t$ is a 1-parameter parabolic subgroup. Fix any $0\neq t_0\in I$, by Lemma \ref{lemma:eigenvalue}, $(K_{t_0})_o$ has only 1 real eigenvalue $\lambda>0$. We have $(K_{t_0})_o=\lambda Id$. Because $X$ is not linearizable at $o$, by Lemma \ref{lemma:linear}, $(D\phi^t)_o$ fixes some non-zero $v\in T_oM$. It follows that $$g(v,v)=g_{t_0}(v,v)=\dfrac{1}{\det((K_{t_0})_o)}g((K_{t_0})^{-1}_ov,v).$$
 Then we have $\lambda=1$, and $(K_{t_0})_o=Id$. By Lemma \ref{lemma:eigenvalue}, the associated Mobius map for $L_{t_0}$ is $T(z)=\dfrac{2z-1}{z}$.\\
 \\
 Now we are ready to prove Theorem $\ref{thm:rem}$.

\begin{proof}[Proof of Theorem $\ref{thm:rem}$]
First we prove $D(M^n,g)\geq 3$. Suppose $D(M,g)=2$, and we try to obtain a contradiction.\\
\\
Let $U,I,L_t$ be constructed as before. Fix some $0<t_0\in I$. We have $(\phi^t)^*g(o)=g(o)$ for all $t\in I$. This implies $(D\phi^t)_o$ is a 1-parameter subgroup of $SO(g)$ at $o$. By Remark \ref{rmk1}, we can choose $p\in \pi^{-1}(o)$ such that in the projective normal coordinate $\sigma_p$ of $P=P(\nabla)$ with respect to $p$, $X$ has the following form:
$$X_x=Ax+\langle w,x\rangle x,\quad A\in\mathfrak{so}(n),\ w=-e_1\in \mathrm{Ker}A.$$
Then in this local coordinate $\sigma_p$, the flow $\phi^t$ of $X$ has the following form:
\begin{align}
\label{eqn18}
\phi^t(x)=\dfrac{1}{1+tx_1}\left(e^{tA}x\right),\quad x=(x_1,\cdots,x_n).
\end{align}
Choose a convex neighbourhood $C$ of $o$ which lies in the image of the local coordinate $\sigma_p$. By Corollary 3 of $\cite{matrem}$,  for all $i\in \lbrace 1,\cdots,n-1\rbrace$, the eigenfunctions $\lambda_i$ of $K_{t_0}$ are globally ordered on $C$ in the following sense:
\begin{itemize}
\item
$\lambda_i(x)\leq \lambda_{i+1}(y)$ for all $x,y\in C$.
\item
If $\exists x\in C$ such that $\lambda_i(x)<\lambda_{i+1}(x)$, then $\lambda_i(y)<\lambda_{i+1}(y)$ for almost all $y\in C$.
\end{itemize}
At $o$, we have $\lambda_i(o)=1$ for all $i$. Note that $n\geq3$ implies $\lambda_2= \cdots= \lambda_{n-1}\equiv 1$ on $C$. Indeed it follows that for $n\geq3$, $\lambda_1(x)\leq \lambda_2(x)= 1$, and $\lambda_n(x)\geq \lambda_{n-1}(x)= 1$ for all $x \in C$.  We can show all eigenfunctions $\lambda_i$ have to be constant on $C$. In the coordinate $\sigma_p$, define the following subsets of $C$:
$$C^+=\lbrace x\in C:x_1>0\rbrace,\ C^-=\lbrace x\in C: x_1<0\rbrace.$$
If $\exists x_1\in C$ such that $\lambda_1(x_1)<1$, we can find $x_0\in C^+$ such that $\lambda_1(x_0)<1$, and $\phi^t(x_0)\in C^+$ for all $t\geq0$. Denote $\mathcal{D}$ the closure of the integral curve of $\phi^t(x_0)$ for $t\geq0$, then clearly $\mathcal{D}\subset C$. From Equation \eqref{eqn18}, we can see $\mathcal{D}$ is compact and connected. Hence $\lambda_1(\mathcal{D})$ is an interval $I_1=[d,1]$ with $d<1$. The eigenfunctions of $K_{t_0}$ are all positive on $U$, so we have $0<d<1$ and $0<\lambda_1(x)\leq 1$ $\forall x\in \mathcal{D}$. Because $T(z)=\dfrac{2z-1}{z}$ is monotonically increasing on $\mathbb{R}^+$, we have $T(\lambda_1(x))=\lambda_1(\phi^{t_0}(x))$ for all $x\in\mathcal{D}$. It follows that $$T([d,1])=T(\lambda_1(\mathcal{D}))=\lambda_1(\phi^{t_0}(\mathcal{D}))\subset \lambda_1(\mathcal{D})=[d,1],\ 0<d<1.$$
This is clearly impossible for the Mobius map $T(z)=\dfrac{2z-1}{z}$ as $T(d)<d$ for $0<d<1$. Hence $\lambda_1\equiv 1$ on $C$. Replacing $C^+$ with $C^-$, and $T$ with $T^{-1}$, respectively, we can show $\lambda_n\equiv 1$ on $C$. It follows that all eigenfunctions of $K_{t_0}$ are constant on $C$.\\
\\
If all eigenfunctions of $K_{t_0}$ are constant on $C$, the metrics $g_{t_0}$ and $g$ are affine equivalent on $C$. This is clearly impossible by Corollary \ref{cor:2.1.1}. It follows that $D(M,g)\neq 2$.\\
\\
Since $X$ is a projective vector field for $(M^n,g)$, according to Section 2.1 of $\cite{matrem}$, we have
$$K'=g^{-1}\mathcal{L}_Xg-\dfrac{1}{n+1}Tr(g^{-1}\mathcal{L}_Xg)\cdot Id \in B(M,g).$$
Then $D(M,g)=1$ implies that $X$ is a homothetic vector field for $g$, which is impossible. Hence we have $D(M,g)\geq 3$.\\
\\
When $n=3$, by Section 1.2 of $\cite{mike}$, the maximum degree of mobility of a 3-dimensional connected Riemannian manifold with non-constant curvature is 2. This completes the proof.
\end{proof}
\begin{remark}
\label{rmk2}
The conditions $n\geq 3 $, and $g$ is Riemannian are necessary in the proof. If $n=2$, one may end up with $\lambda_1=1$ on $C^+,\lambda_1<1$ on $C^-$, together with $\lambda_2=1$ on $C^-,\lambda_2>1$ on $C^+$. If $g$ is not Riemannian, $(K_{t_0})_o$ may not be the identity matrix. Besides, the global ordering of eigenfunctions of BM-structures can only be applied for Riemannian metrics
\end{remark}
\subsection{Global results when $(M^n,g)$ is closed, proof of Theorem $\ref{thm:closed}$}
\label{sec:closed}
In this section, we give the proof of Theorem $\ref{thm:closed}$ stated at the end of the introduction.
\begin{proof}[Proof of Theorem \ref{thm:closed}]
Since $X$ is not linearizable at $o$, we have $D(M,g)\geq 2$. First suppose $D(M,g)=2$, then $L_t$ is a 1-parameter parabolic subgroup by Theorem $\ref{thm:parabolic}$. This is in fact impossible by the following (We discovered that the argument below is analogous to part of Section 9.2 of $\cite{zig}$).
\\
\\
Because $L_t$ is parabolic, there exists $K\in B=B(M,g)$ such that 
$$L_t(Id)=e^{tb}(tK+Id),\ b\in \mathbb{R}.$$
$X$ is complete because $M$ is compact. Just fix $t=1$, then $L_1(Id)=e^b(K+Id)$ is the $g$-strength of $(\phi^1)^*g$ on $M$. Because $M$ is closed and connected, according to Theorem 6 of $\cite{local}$, all non-real eigenfunctions of $L_1(Id)$ are constant. It follows that all non-real eigenfunctions of $K$ are constant on $M$. On the other hand,  all real eigenfunctions of $K$ are identically zero. Otherwise, $\exists t_0\in \mathbb{R}$ such that $L_{t_0}(Id)=K_{t_0}$ is degenerate. Then all eigenfunctions of $K$ are constant. This implies $g_t$ and $g$ are affine equivalent for all $t\in \mathbb{R}$, which is impossible.
\\
\\
From above we have $D(M,g)\geq 3$. According to Corollary 5.2 of $\cite{closed}$, we have $g$ is Riemannian with positive constant sectional curvature.
\end{proof}



\end{document}